\DeclarePairedDelimiter\abs{\lvert}{\rvert}%
\DeclarePairedDelimiter\norm{\lVert}{\rVert}%
\let\oldabs\abs
\def\abs{\@ifstar{\oldabs}{\oldabs*}}
\let\oldnorm\norm
\def\norm{\@ifstar{\oldnorm}{\oldnorm*}}
\newtheorem{thm}{Theorem}[section]
\newcommand{\bt}{\begin{thm}}
\newcommand{\et}{\end{thm}}
\newtheorem{cor}[thm]{Corollary}   
\newcommand{\bc}{\begin{cor}}
\newcommand{\ec}{\end{cor}}
\newtheorem{lem}[thm]{Lemma}   
\newcommand{\bl}{\begin{lem}}
\newcommand{\el}{\end{lem}}
\newtheorem{prop}[thm]{Proposition}
\newcommand{\bp}{\begin{prop}}
\newcommand{\ep}{\end{prop}}
\newtheorem{defn}[thm]{Definition}
\newcommand{\bd}{\begin{defn}}    
\newcommand{\ed}{\end{defn}}
\newtheorem{rmrk}[thm]{Remark}   
\newcommand{\br}{\begin{rmrk}}
\newcommand{\er}{\end{rmrk}}
\newcommand{\be}{\begin{equation}}
\newcommand{\ee}{\end{equation}}
\newcommand{\R}{\mathbb{R}}
\DeclareMathOperator{\Vol}{Vol}
\DeclareMathOperator{\Diam}{Diam}
\newcommand{\Lip}{\operatorname{Lip}}
\newcommand{\lp}{\left (}
\newcommand{\rp}{\right )}
\def\Xint#1{\mathchoice
{\XXint\displaystyle\textstyle{#1}}%
{\XXint\textstyle\scriptstyle{#1}}%
{\XXint\scriptstyle\scriptscriptstyle{#1}}%
{\XXint\scriptscriptstyle\scriptscriptstyle{#1}}%
\!\int}
\def\XXint#1#2#3{{\setbox0=\hbox{$#1{#2#3}{\int}$ }
\vcenter{\hbox{$#2#3$ }}\kern-.6\wd0}}
\def\dashint{\Xint-}
\begin{document}

\title{Exploring a Modification of $d_p$ Convergence}

\author{Brian Allen}
\address{Lehman College, CUNY}
\email{brianallenmath@gmail.com}

\author{Edward Bryden$^\dagger$}
\address{Universiteit Antwerpen}
\email{etbryden@gmail.com}
\thanks{$^\dagger$funded by FWO grant 12F0223N}


\begin{abstract}
 In the work by M. C. Lee, A. Naber, and R. Neumayer \cite{LNN} a beautiful $\varepsilon$-regularity theorem is proved under small negative scalar curvature and entropy bounds. In that paper, the $d_p$ distance for Riemannian manifolds is introduced and the quantitative stability results are given in terms of this notion of distance, with important examples showing why other existing notions of convergence are not adequate in their setting. Due to the presence of an entropy bound, the possibility of long, thin splines forming along a sequence of Riemannian manifolds whose scalar curvature is becoming almost positive is ruled out. It is important to rule out such examples since the $d_p$ distance is not well behaved in the presences of splines that persist in the limit. Since there are many geometric stability conjectures \cite{GromovFour, SormaniIAS} where we want to allow for the presence of splines that persist in the limit, it is crucial to be able to modify the $d_p$ distance to retain its positive qualities and prevent it from being sensitive to splines. In this paper we explore one such modification of the $d_p$ distance and give a theorem which allows one to estimate the modified $d_p$ distance, which we expect to be useful in practice.
\end{abstract}

\maketitle

\section{Introduction}

In the work by M. C. Lee, A. Naber, and R. Neumayer \cite{LNN} a beautiful $\varepsilon$-regularity theorem is proved under small negative scalar curvature and entropy bounds. In that paper, the $d_p$ distance for rectifiable Riemannian manifolds is introduced and the quantitative stability results are given in terms of this notion of distance, with important examples showing why other existing notions of convergence are not adequate in their setting. In particular, many examples were given in dimension $n \ge 4$ where shortcuts develop along the sequence, that are not well behaved along Gromov-Hausdroff or Sormani-Wenger Intrinsic Flat convergence, but which are well behaved under the $d_p$ distance. More recently, D. Kazaras and K. Xu \cite{KK} showed that similarly examples, developing what they call drawstings, can exist in dimension $n=3$ with scalar curvature bounds as well. In \cite{LNN}, the presence of an entropy bound rules out the possibility of long, thin splines forming along a sequence of Riemannian manifolds whose scalar curvature is becoming almost positive. This is important since the $d_p$ distance is not well behaved in the presences of splines that persist in the limit. In particular, the distance between points along the spline will become arbitrarily large along such a sequence.

Since there are many geometric stability conjectures stated, by M. Gromov \cite{GromovFour} and C. Sormani \cite{SormaniIAS}, where we want to allow for the presence of long thin splines which persist in the limit, it is important to be able to modify the $d_p$ distance to retain its positive qualities and prevent it from being sensitive to splines. In this paper we explore one such possible modification of the $d_p$ distance and give a theorem which allows one to estimate the modified $d_p$ distance which we expect to be useful in practice.

Let $M_0=(M,g)$ be a Riemannian manifold. We can define the $d_{p,g}$ distance of M. C. Lee, A. Naber, and R. Neumayer \cite{LNN} to be
\begin{align}\label{def:dp}
d_{p,g}(x,y)=\sup \left\{|f(x)-f(y)|: \int_M |\nabla^g f|_g^p dV_{g} \le 1 \right\}.
\end{align} 

One can notice that if a spline develops along a sequence of Riemannian manifolds then the Sobolev constant for $p > n$ of such a sequence cannot be well controlled. In particular, the constant which appears in the Morrey inequality will degenerate and hence the class of functions used to define the $d_{p,g}$ metric will have unbounded differences $|f(x)-f(y)|$ for points $x,y \in M$ along the spline, despite having a controlled $L^p$ norm of the gradient. This motivates our modification of the $d_p$ distance where we control the distances $|f(x)-f(y)|$ by imposing a H\"{o}lder bound in terms of a background metric:

\begin{defn} Let $M$ be a smooth manifold, $g_0$ a smooth Riemannian manifold, and $g$ a continuous Riemannian manifold. Then we define the modified $d_p$ distance between points $x,y \in M$

\begin{align}\label{def:Modifieddp2}
d&_{p,g,g_0}^D(x,y)=
\sup \left\{|f(x)-f(y)|: \mathcal{H}_{p,g_0}(f) \le D, \int_M |\nabla^g f|_g^p dV_{g} \le 1 \right\},
\end{align} 
where
\begin{align}
    \mathcal{H}_{p,g_0}(f)=\sup_{x,y\in M}\frac{|f(x)-f(y)|}{d_{g_0}(x,y)^{\frac{p-n}{p}}}.
\end{align}
\end{defn}
Note that a reasonable choice of $D$ is given by $D =\frac{\Diam(M,g)}{\Diam(M,g_0)^{\frac{p-n}{p}}} $, but we have the freedom to choose this constant as we wish. We also immediately note that if $g_{0,\lambda}=\lambda^2 g_0$ and $g_{\lambda}=\lambda^2g$ then $d_{p,g_{\lambda},g_{0,\lambda}}^D=\lambda^{\frac{p-n}{p}}d_{p,g,g_0}^{D}$. Observe that the $\Diam(M,d_{p,g,g_0}^D) \le D$. In fact, due to the H{\"o}lder restriction on the admissible functions, we see that $d^{D}_{p,g,g_0}(x,y)\leq Dd_{g_0}(x,y)^{\frac{p-n}{p}}$. It follows by Arzela-Ascoli that the space of functions $d^{D}_{p,g,g_0}$ is a priori compact as $g$ varies, defining a  pseudometric as a limit, but may not define a distance function in the limit if tending to zero (See Lemma \ref{lem:compactness_of_distance_functions}).

By considering admissible functions whose H\"{o}lder norm with respect to $g_0$ is controlled, one can see that for an example with splines and bubbles forming along a sequence, the distance in \eqref{def:Modifieddp2} will not be sensitive to these splines or bubbles. In particular, the examples given by the first named author and C. Sormani \cite{Allen-Sormani-2} which are conformal to a sphere and develop a spline or bubble (Examples 3.5 and 3.7 of \cite{Allen-Sormani-2}, respectively) would converge to a round sphere in the modified $d_p$ sense.   In addition, the distance in \eqref{def:Modifieddp2} retains the positive features of the $d_p$ distance not being sensitive to shortcuts developing along a sequence as in the examples of \cite{KK,LNN}. We will elaborate on these conclusions after the statement of the main theorem.

A natural class of Riemannian manifolds to study using the modified $d_p$ distance is given by:

\begin{defn}\label{def:ManifoldClass}
    Given parameters $V_1,V_2,D>0$, $q_1,q_2>1$, and a background Riemannian metric $(M^n,g_0)$, $n \ge 2$, we denote by $\mathcal{N}_{g_0}(q_1,q_2, V_1,V_2,D)$ the collection of compact, connected, oriented Riemannian metric tensors $g$ on $M$ which satisfy the following:
    \begin{enumerate}
        \item $ \|g\|_{L^{\frac{q_1}{2}}(M,g_0)}\leq V_1$,
         \item $ \|g^{-1}\|_{L^{\frac{q_2}{2}}(M,g_0)}\leq V_2$,
        \item $\Diam(M,g) \le D$.
    \end{enumerate}
\end{defn}
When studying geometric stability problems related to rigidity theorems in geometric analysis, a natural background metric presents itself from the rigidity problem one is studying. For instance, in the case of the stability of Geroch's conjecture a flat torus is a natural background metric $g_0$, for the stability of the positive mass theorem Euclidean space is the natrual background metric $g_0$, and in the case of stability of Llarull's theorem the round sphere would be the background metric $g_0$. Hence for each of these stability questions there is a natural way to define the class $\mathcal{N}_{g_0}(q_1,q_2,V_1,V_2,D)$ where $q_1$ and $q_2$ are chosen depending on the problem. Our goal in this paper then is to provide a result which allows one to establish $L^p$ convergence of a sequence of inverse metrics for Riemannian manifolds in the class $\mathcal{N}(q_1,q_2, V_1,V_2,D)$ in order to conclude $d_{p,g_i,g}^D$ convergence.

\begin{thm}\label{thm:Main Theorem}
Let $M^n$ be a compact, connected, and oriented manifold, $g_i,g_0$ continuous Riemannian metrics, $p > 3n$, $n \ge 3$. Then there exists a $\bar{D}(g_0)$ so that for $D \ge \bar{D}$ if
\begin{align}
\int_M|g_j|_{g_0}^{\frac{n}{2}}dV_{g_0}&\le C,\label{eq:MainThmgBound}
\end{align}
and
\begin{align}\label{LpMetricBounds}
\int_M|g_j^{-1}-g_0^{-1}|_{g_0}^{\frac{n(p-1)}{2}}dV_{g_0} &\rightarrow 0,
\end{align}
then  
\begin{align}
 d_{p,g_j,g_0}^D \rightarrow  d_{p,g_0,g_0}^D,
\end{align}
uniformly.
\end{thm}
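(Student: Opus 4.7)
The plan is to first reduce the asserted uniform convergence to pointwise convergence, and then prove pointwise convergence by showing that admissible test functions for $d^{D}_{p,g_j,g_0}$ and $d^{D}_{p,g_0,g_0}$ can be exchanged up to a factor $1+o(1)$. Every admissible $f$ satisfies $|f(x)-f(y)|\le D\,d_{g_0}(x,y)^{(p-n)/p}$, so each distance $d^{D}_{p,g_j,g_0}$ is itself $(p-n)/p$-Hölder on $M\times M$ with constant $D$. Hence $\{d^{D}_{p,g_j,g_0}\}$ is uniformly bounded and equicontinuous on $M\times M$, and by Arzelà--Ascoli it suffices to establish pointwise convergence at each $(x,y)$; any $C^0$-subsequential limit coinciding with the pointwise limit yields uniform convergence.

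The heart of the argument is a uniform integral comparison: for every Lipschitz $f$ on $M$,
\[
\Bigl|\int_M|\nabla^{g_j}f|_{g_j}^{p}\,dV_{g_j}-\int_M|\nabla^{g_0}f|_{g_0}^{p}\,dV_{g_0}\Bigr|\le \eta_j\cdot\Phi(f),
\]
where $\eta_j\to 0$ depends only on the sequence, and $\Phi(f)$ is a quantity controlled by the two admissibility constraints. Starting from the pointwise identities
\[
|\nabla^{g_j}f|_{g_j}^{2}-|\nabla^{g_0}f|_{g_0}^{2}=(g_j^{-1}-g_0^{-1})(df,df),\quad dV_{g_j}=\omega_j\,dV_{g_0},\ \omega_j=\sqrt{\det g_j/\det g_0},
\]
and applying the mean value inequality to $t\mapsto t^{p/2}$, the error is reduced to integrals of the form $\int|g_j^{-1}-g_0^{-1}|_{g_0}^{\alpha}|\nabla f|^{\beta}\,dV_{g_0}$ and $\int|\omega_j-1|\cdot|\nabla f|^{p}\,dV_{g_0}$. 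Hölder's inequality, with exponents tuned so that $g_j^{-1}-g_0^{-1}$ appears in $L^{n(p-1)/2}$ (killed by \eqref{LpMetricBounds}) and $|\nabla f|^{p}$ in the complementary exponent, closes the estimate; $p>3n$ is exactly what makes these Hölder pairings admissible in dimension $n\ge 3$. For the volume factor, $\omega_j\to 1$ in measure (via $g_j^{-1}\to g_0^{-1}$ in measure and $\det g_j\cdot\det g_j^{-1}=1$), while $\omega_j\lesssim|g_j|_{g_0}^{n/2}$ together with \eqref{eq:MainThmgBound} gives the uniform integrability needed for a Vitali-type argument.

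With this comparison in hand, the two pointwise inequalities follow. For the liminf direction, fix a near-maximizer $f_{\epsilon}$ for $d^{D}_{p,g_0,g_0}(x,y)$ and mollify it in $g_0$-normal coordinates to a smooth $\tilde f_{\epsilon}$: convolution with a nonnegative kernel does not increase $\mathcal{H}_{p,g_0}$, and preserves the Sobolev norm up to $1+o(1)$. The role of $\bar D(g_0)$ here is to guarantee that the Hölder constraint is slack enough for $\tilde f_\epsilon$ to remain admissible (e.g.\ $\bar D$ can be taken comparable to the Morrey constant of $(M,g_0)$). Applying the integral comparison yields $\int|\nabla^{g_j}\tilde f_{\epsilon}|^{p}_{g_j}\,dV_{g_j}\le 1+o(1)$, and rescaling produces admissible competitors for $d^{D}_{p,g_j,g_0}$. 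For the limsup direction, choose near-maximizers $f_j$ for $d^{D}_{p,g_j,g_0}(x,y)$; the Hölder bound produces a uniform $C^0$-subsequential limit $f_\infty$ inheriting the Hölder constraint, while the reverse direction of the integral comparison, combined with weak lower semicontinuity of the $W^{1,p}(M,g_0)$ norm on a weakly convergent subsequence of $\{f_j\}$, yields $\int|\nabla^{g_0}f_\infty|^{p}_{g_0}\,dV_{g_0}\le 1$, so $f_\infty$ is admissible for $d^{D}_{p,g_0,g_0}$.

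The main obstacle is the asymmetric nature of the hypotheses: $g_j^{-1}-g_0^{-1}$ is controlled strongly in a high $L$-norm, while $g_j$ is only bounded in $L^{n/2}$. Transferring sharp inverse-metric convergence to sufficient convergence of the volume form, and ensuring via Hölder pairings that the possible concentration of $|g_j|_{g_0}$ on small sets cannot spoil the integral comparison, will be the most delicate computation; balancing these exponents against the Hölder exponent $(p-n)/p$ of admissible functions is precisely what forces the hypothesis $p>3n$.
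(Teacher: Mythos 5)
Your high-level structure (Arzelà--Ascoli reduction, treat liminf and limsup separately) matches the paper, but the crucial step in the limsup direction contains a genuine gap. You propose to take near-maximizers $f_j$ for $d^{D}_{p,g_j,g_0}(x,y)$, extract a subsequence converging weakly in $W^{1,p}(M,g_0)$, and apply weak lower semicontinuity of the $W^{1,p}(M,g_0)$ norm. That requires a uniform bound on $\|\nabla^{g_0}f_j\|_{L^p(g_0)}$, but no such bound follows from the hypotheses. The admissibility constraint controls $\int_M|\nabla^{g_j}f_j|^p_{g_j}\,dV_{g_j}$; transferring this to a $g_0$-gradient estimate requires pairing with $|g_j|^{p/2}_{g_0}$ and $P_j^{-1}$, and the theorem only assumes $|g_j|_{g_0}\in L^{n/2}$ (the critical power, deliberately chosen to allow splines and bubbles), so the $g_0$-gradient in $L^p$ is simply not available. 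Your proposed ``integral comparison'' with a single error factor $\Phi(f)$ masks the circularity: after the mean-value inequality, the error terms contain $\int|\nabla^{g_0}f|^p_{g_0}\,dV_{g_0}$ itself, which is the quantity you are trying to bound.

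The paper's Theorem \ref{thm:pointwiseBelow} circumvents this by only establishing a much weaker bound: $\|\nabla^{g_0}\varphi_j\|_{L^\eta(g_0)}\le C$ for $\eta=\frac{5n}{12}<n\ll p$, achievable because the exponent $\frac{p\eta}{p-\eta}<\frac{n}{2}$ falls within the given $L^{n/2}$ bound on $|g_j|_{g_0}$. It then upgrades this via a duality argument: it takes the weak $L^\eta$ limit $\Phi_\infty$ of $|\nabla^{g_0}\varphi_j|_{g_0}$ and the weak $L^p$ limit $\psi_\infty$ of $|\nabla^{g_0}\varphi_j|_{g_j}P_j^{1/p}$ (for which $\int\psi_\infty^p\le 1$ is automatic), and proves $\Phi_\infty=\psi_\infty$ a.e.\ by a term-by-term estimate of $\int(\Phi_\infty-\psi_\infty)P_j^{1/p}\phi\,dV_{g_0}$ against arbitrary test $\phi$. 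Combined with Proposition \ref{prop: Technical Gradient Check} (upper-gradient characterization), this yields $\int|\nabla^{g_0}\varphi_\infty|^p\le 1$ without ever having a uniform $W^{1,p}(g_0)$ bound on the sequence. Reproducing the limsup direction requires some version of this two-exponent, weak-convergence-and-duality scheme; your sketch skips it. As a secondary point, in the liminf direction your appeal to ``uniform integrability'' of $\omega_j$ for a Vitali argument is not justified by $\int|g_j|^{n/2}_{g_0}\le C$ alone (that is only an $L^1$ bound on $\omega_j$); the needed $L^1$ convergence $\omega_j\to1$ does hold, but must be derived by combining both hypotheses through a Hölder pairing as in the paper's eigenvalue bookkeeping, not by a uniform-integrability argument.
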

\begin{rmrk}
It was observed by the first named author in \cite{BALp} that if one has an $L^p$ bound $\int_M|g_j|_{g_0}^{\frac{p}{2}}dV_{g_0}\le C$ for $p > n$ then one can obtain H\"{o}lder bounds on the distance function $d_{g_j}$ in terms of $d_{g_0}$. Since examples involving splines and bubbles do not satisfy such H\"{o}lder bounds, it is important that assumption \eqref{eq:MainThmgBound} is for the critical power so that examples involving splines and bubbles are allowed in Theorem \ref{thm:Main Theorem}. In fact, the first named author and C. Sormani \cite{Allen-Sormani-2} give examples which show that under a $L^{\frac{n}{2}}$ bound on $g_j$, splines and bubbles can form. So Theorem \ref{thm:Main Theorem} shows that the modified $d_p$ distance of \eqref{def:Modifieddp2} is not sensitive to such behavior. Since M-C Lee, A. Naber, and R. Neumayer \cite{LNN} assume a entropy bound they are able to obtain $L^p$ bounds on the metric tensors for high powers of $p$ which is not surprising since an entropy bound will rule out splines and bubbles. 
\end{rmrk}

\begin{rmrk}
    In \cite{LNN}, many examples with various collapsing along a line phenomenon were given with $n \ge 4$ which $d_p$ converge to smooth Riemannian manifolds and are shown to satisfy $L^p$ bounds and convergence, similar to the requirements of Theorem \ref{thm:Main Theorem}. Recently, D. Kazaras and K. Xu \cite{KK} have shown that similar collapsing examples, called drawstrings, are possible in dimension $3$. One can check that the examples in \cite{KK} satisfy the hypotheses of Theorem \ref{thm:Main Theorem}. Hence the modified $d_p$ distance given in this paper is well suited to problems where splines, bubbles, and drawstrings occur.

    For example, in Theorem 1.19 of Lee, Naber, and Neumayer \cite{LNN} we see a torus stability result where almost non-negative scalar curvature and a bound on the entropy is assumed. The entropy bound removes the possibility of splines and bubbles forming and is generally a rather strong assumption. Using the main theorem of this paper we can trade out the entropy bound for $q_1=n$ and $q_2=n(p-1)$ for $p >3n$ which will allow for the possibility of splines and bubbles. Then one would need to figure out how to use the almost non-negativity of the scalar curvature in order to prove the inverse metric convergence of Theorem \ref{thm:Main Theorem} in order to conclude modified $d_p$ stability. Similar applications to the stability of the positive mass theorem should also be possible allowing for splines, bubbles, and drawstrings.
\end{rmrk}

\begin{rmrk}
    We note that it is also possible to choose $L^p$ bounds in the assumptions of Theorem \ref{thm:Main Theorem} which also work for $n=2$ but those powers are not conveniently stated and the main purpose of these results is to apply them to the case where $n \ge 3$. We also note that by reworking the arguments given in the proof of Theorem \ref{thm:Main Theorem} one is able to assume a power slightly under the critical power in \eqref{eq:MainThmgBound} at the cost of more complicated powers in the assumptions of Theorem \ref{thm:Main Theorem}.
\end{rmrk}

\begin{rmrk}
In the course of proving Theorem \ref{thm:Main Theorem}, in particular in Theorem \ref{LimInfEst}, we show that the hypotheses of Theorem \ref{thm:Main Theorem} also imply $L^{\frac{k}{2}}$, $k < n$ convergence of $g_j$ to $g_0$. If one is in a situation where adding $g_j \ge \left( 1-\frac{C}{j} \right) g_0$ is a natural hypothesis then by previous work of the authors \cite{AB}, we know that $d_{g_j}$ converges pointwise almost everywhere to $d_{g_0}$ with a Hausdorff measure estimate on the size of the set where pointwise convergence does not hold.
\end{rmrk}

We now give a precise definition of what it means for a sequence of Riemannian manifolds to converge in the modified $d_p$ sense. 

\begin{defn}\label{Def:Unifd_pDistance}
    Let $D>0$, $p >n$, $(M,g_0)$ be a smooth Riemannian manifold. We define the uniform modified $d_p$  distance between two continuous Riemannian metric tensors on $M$:
    \begin{align}
       d_{p,g_0}^{sup,D}((M,g_1),(M,g_2))= \sup_{x,y \in M} |d_{p,g_1,g_0}^D(x,y)- d_{p,g_2,g_0}^D(x,y)|.
    \end{align}
\end{defn}

\section{Background}

In \cite{LNN}, many useful properties of $d_p$ distance are established. Since the only modification we make here is to further restrict the class of admissible functions in the definition we will see that these important properties are retained for the modified $d_p$ distance.

\subsection{Properties of Modified $d_p$ Distance}
Let us begin by observing a simple, yet very useful, consequence of the definition of $d^{D}_{p,g,g_{0}}$.
\begin{lem}\label{lem:compactness_of_distance_functions}
		Let $(M,g_0)$ be a compact, connected, oriented Riemannian manifold, $p>n$, and $D>0$. Then, the space of distance functions $d^{D}_{p,g,g_{0}}$ is pre-compact in the space of Riemannian metrics $g$ on $M$.
	\end{lem}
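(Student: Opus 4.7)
The plan is to extract a uniform Hölder bound on $d^{D}_{p,g,g_{0}}$ with respect to $d_{g_{0}}$ that is completely independent of $g$, and then invoke the Arzelà–Ascoli theorem on $M\times M$. First, I would note that by the very definition of the admissible class, every competitor $f$ in the supremum defining $d^{D}_{p,g,g_{0}}(x,y)$ satisfies $\mathcal{H}_{p,g_{0}}(f)\le D$, hence
$$|f(x)-f(y)|\le D\, d_{g_{0}}(x,y)^{(p-n)/p}.$$
Taking the supremum over all admissible $f$ yields the pointwise bound $d^{D}_{p,g,g_{0}}(x,y)\le D\, d_{g_{0}}(x,y)^{(p-n)/p}$, which is completely independent of $g$. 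In particular the whole family $\{d^{D}_{p,g,g_{0}}\}_{g}$ is uniformly bounded on $M\times M$ by $D\,\Diam(M,g_{0})^{(p-n)/p}$.

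Next, I would verify that $d^{D}_{p,g,g_{0}}$ obeys the triangle inequality: since each admissible $|f(x)-f(y)|$ does, and the supremum of a sum is dominated by the sum of suprema, one obtains
$$d^{D}_{p,g,g_{0}}(x,y)\le d^{D}_{p,g,g_{0}}(x,z)+d^{D}_{p,g,g_{0}}(z,y)$$
for all $x,y,z\in M$. Combined with the Hölder estimate above, for any pairs $(x_{1},y_{1}),(x_{2},y_{2})\in M\times M$ this gives
$$\bigl|d^{D}_{p,g,g_{0}}(x_{1},y_{1})-d^{D}_{p,g,g_{0}}(x_{2},y_{2})\bigr|\le D\left(d_{g_{0}}(x_{1},x_{2})^{(p-n)/p}+d_{g_{0}}(y_{1},y_{2})^{(p-n)/p}\right),$$
which is uniform equicontinuity on $M\times M$ with modulus independent of $g$.

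Finally, since $M\times M$ endowed with the product metric from $g_{0}$ is compact, the Arzelà–Ascoli theorem applied to the family $\{d^{D}_{p,g,g_{0}}\}_{g}$, viewed as continuous functions on $M\times M$, yields a uniformly convergent subsequence along any sequence of metrics $g$, proving the claimed pre-compactness in $C(M\times M)$ with the uniform topology. There is essentially no obstacle here beyond unpacking the definition: the Hölder restriction $\mathcal{H}_{p,g_{0}}(f)\le D$ was built into the admissible class precisely so that it transfers to a $g$-independent Hölder estimate on the distance functions themselves, and Arzelà–Ascoli does the rest. The only caveat worth recording, and presumably the reason the lemma is stated in this form, is that the limiting object is a priori only a pseudo-distance on $M$; its positive definiteness or realizability as a Riemannian distance is a separate matter, exactly as flagged in the discussion preceding the statement.
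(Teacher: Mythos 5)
Your proof is correct and takes the same route as the paper: derive the $g$-independent H\"older bound $d^{D}_{p,g,g_{0}}(x,y)\leq Dd_{g_0}(x,y)^{(p-n)/p}$ directly from the definition of the admissible class, then invoke Arzel\`a--Ascoli. The paper states this very tersely (the bound, then ``thus the family satisfies the hypotheses of Arzel\`a--Ascoli''), whereas you usefully spell out the implicit triangle-inequality step that turns the pointwise bound into genuine equicontinuity on $M\times M$.
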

	\begin{proof}
		It follows from the definition that for any $g$ we have
			\begin{equation}
				d^{D}_{p,g,g_{0}}(x,y)\leq Dd_{g_0}(x,y)^{\frac{p-n}{p}}.
			\end{equation}
		Thus, the family satisfies the hypotheses of the Arzela-Ascoli theorem.
	\end{proof}

We would like to be able to show that the distances themselves converge as $p \rightarrow \infty$. In Remark 2.33 of \cite{LNN} they mention that for reasonable spaces one should find $d_p \rightarrow d$ as $p \rightarrow \infty$. We start by showing that pointwise convergence always holds for $p \rightarrow \infty$.

	\begin{lem}\label{PointwiseConvergence}
		Let $(M,g)$, $(M,g_0)$ be compact, connected, oriented Riemannian manifolds, and $(M,d_{p,g,g_0})$ the corresponding modified $d_p$ distance metric space. If $d_g \le D_0 d_{g_0}$ then for $x_1,x_2 \in M$ and every $D \ge D_0+2$ we find
			\begin{align}
    			d_{p,g,g_0}^{D}(x_1,x_2) \rightarrow d_g(x_1,x_2),
			\end{align}
		as $p \rightarrow \infty$.
	\end{lem}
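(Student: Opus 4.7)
The plan is to sandwich $d^{D}_{p,g,g_0}(x_1,x_2)$ between two quantities both tending to $d_g(x_1,x_2)$ as $p\to\infty$, constructing an explicit competitor on the lower side and running a weak compactness argument on the upper side.

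\emph{Lower bound.} I would test against the normalized $g$-distance function
\[
f_p(y) := \frac{d_g(x_1,y)}{\Vol(M,g)^{1/p}}.
\]
Since $d_g(x_1,\cdot)$ is $1$-Lipschitz with respect to $g$, one has $\int_M |\nabla^g f_p|_g^p\,dV_g \le 1$, while the hypothesis $d_g \le D_0 d_{g_0}$ gives
\[
\mathcal{H}_{p,g_0}(f_p) \le \frac{D_0\, \Diam(M,g_0)^{n/p}}{\Vol(M,g)^{1/p}},
\]
which converges to $D_0$ as $p\to\infty$. The buffer $D\ge D_0+2$ thus guarantees admissibility of $f_p$ for all sufficiently large $p$, yielding
\[
\liminf_{p\to\infty} d^{D}_{p,g,g_0}(x_1,x_2) \ge \lim_{p\to\infty} \frac{d_g(x_1,x_2)}{\Vol(M,g)^{1/p}} = d_g(x_1,x_2).
\]

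\emph{Upper bound.} Along any sequence $p_k\to\infty$ I would pick admissible $f_k$ almost realizing $d^{D}_{p_k,g,g_0}(x_1,x_2)$ and normalize so that $f_k(x_1)=0$. The Hölder constraint with the fixed constant $D$ and $f_k(x_1)=0$ then yields a uniform $C^0$ bound, and on small $g_0$-balls a uniform Hölder estimate with the fixed exponent $(p_1-n)/p_1$ (since for $d_{g_0}(x,y)\le 1$ the quantity $d_{g_0}(x,y)^{(p_k-n)/p_k}$ decreases as $p_k$ grows). Arzel\`a--Ascoli then extracts a subsequence converging uniformly to some $f_\infty\in C^0(M)$. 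Hölder's inequality applied to $\|\nabla^g f_k\|_{L^{p_k}(g)}\le 1$ gives $\|\nabla^g f_k\|_{L^q(M,g)} \le \Vol(M,g)^{1/q - 1/p_k}$ for every fixed $q>n$ and $p_k>q$, so $\{f_k\}$ is bounded in $W^{1,q}(M,g)$ for every such $q$. A diagonal subsequence converges weakly in each $W^{1,q}$; weak lower semicontinuity of $f\mapsto \|\nabla^g f\|_{L^q(g)}$ yields $\|\nabla^g f_\infty\|_{L^q(g)}\le \Vol(M,g)^{1/q}$, and sending $q\to\infty$ gives $\|\nabla^g f_\infty\|_{L^\infty(g)}\le 1$. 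Thus $f_\infty$ is $1$-Lipschitz with respect to $d_g$, whence $|f_\infty(x_1)-f_\infty(x_2)|\le d_g(x_1,x_2)$; uniform convergence transfers this inequality to the $f_k$, and so $\limsup_{p\to\infty} d^{D}_{p,g,g_0}(x_1,x_2)\le d_g(x_1,x_2)$.

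The main delicate point is the weak lower semicontinuity of $f\mapsto \|\nabla^g f\|_{L^q(g)}$ on a manifold equipped with a merely continuous $g$, together with the passage $q\to\infty$ to obtain a bona fide Lipschitz bound in $d_g$. Both follow from convexity of the underlying functional and the chart-by-chart bi-Lipschitz equivalence of $|\cdot|_g$ to a Euclidean norm with continuous positive-definite weight, so the real work is organizing the diagonal compactness argument cleanly. The lower bound is essentially elementary once the buffer $D\ge D_0+2$ is used to absorb the small factors $\Diam(M,g_0)^{n/p}$ and $\Vol(M,g)^{-1/p}$.
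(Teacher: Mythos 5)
Your proof is correct and follows the same basic sandwich strategy as the paper's: an explicit scaled Lipschitz competitor for the lower bound and a weak-compactness argument for the upper bound. The differences are minor but worth noting. On the lower side, you use the exact competitor $d_g(x_1,\cdot)/\Vol(M,g)^{1/p}$, while the paper takes a generic $1$-Lipschitz function $f_\varepsilon$ almost realizing $d_g(x_1,x_2)$ and scales it by $\abs{M}^{-1/p}$; your choice avoids the extra $\varepsilon$ and is slightly cleaner, though substantively the same. On the upper side, you obtain uniform convergence of the normalized test functions via the $\mathcal{H}_{p,g_0}$-constraint itself (fixing $f_k(x_1)=0$ and using that $t^{(p_k-n)/p_k}\le t^{(p_1-n)/p_1}$ for $t\le 1$ and $p_k\ge p_1$ to get equicontinuity, then Arzel\`a--Ascoli), whereas the paper routes through the Poincar\'e inequality to get $W^{1,p}$-bounds on $f_q-\bar f_q$ and then invokes the compact Morrey embedding. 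Your route exploits the H\"older constraint that is already built into the modified $d_p$ definition, which is arguably more direct here; the paper's route is closer to what one would do for the unmodified $d_p$ and so is more generic. Both then conclude by the same mechanism: bound $\|\nabla^g f_k\|_{L^q(g)}$ uniformly for fixed $q$ via H\"older, pass to a weak limit, use weak lower semicontinuity, and send $q\to\infty$ to conclude the limit is $1$-Lipschitz with respect to $d_g$. The one point you flag as delicate, weak lower semicontinuity of the $L^q$-gradient norm for merely continuous $g$, is indeed standard (convexity plus local bi-Lipschitz comparison with the Euclidean norm) and is used implicitly in the paper as well, so there is no gap.
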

	\begin{proof}
		For each $q > n$ consider a test function $f_q$ so that
			\begin{align}
    			d_{q,g,g_0}^{D}(x_1,x_2) = |f_q(x_1)-f_q(x_2)|+\varepsilon.
			\end{align}
		By definition, we have that
		\begin{equation}
			\int_{M}\abs{\nabla f_q}^qdV_{g}\leq 1
		\end{equation}
		and
		\begin{equation}
			\mathcal{H}_{q,g_0}(f_q)\leq D.
		\end{equation}

		Now, by H{\"o}lder's inequality for $1< p<q<\infty$
			\begin{equation}
    			\left(\int_{M}\abs{\nabla^g f_q}^pdV_{g}\right)^{\frac1p}\leq\abs{M}^{\frac1p-\frac1q}\left(\int_{M}\abs{\nabla^g f_q}^{q}dV_{g}\right)^{\frac1q}
			\end{equation}
		which implies that for $q > p$ we have
			\begin{equation}
    			\|\nabla^g f_{q}\|_{L^p}\leq\abs{M}^{\frac1p-\frac1q}
			\end{equation}
		Therefore, we see that for any fixed $p$ we have
			\begin{equation}
    			\limsup_{q\rightarrow\infty}\|\nabla^g f_q\|_{L^p}\leq\abs{M}^{\frac1p}
			\end{equation}

		Next, by the Poincare inequality we know
			\begin{align}\label{LpBound}
   				\dashint_M|f_q - \bar{f}_q|^pdV_g \le C_p\dashint_M |\nabla^g f_q|_g^pdV_g \le C_p,
			\end{align}
		and hence $\|f_q-\bar{f}_q\|_{W^{1,p}}\le C$. So we can extract a subsequence so that
			\begin{align}
  				f_q-\bar{f}_q \rightarrow f_{\infty}, \quad   \nabla^g f_q \rightharpoonup h_{\infty}, \quad \text{ in } L^p,\quad\text{for all $1<p<q$.}
			\end{align}
		By successively picking subsequences as $p \rightarrow \infty$ and using the lower semi-continuity of the norm under weak convergence, it follows that for all $p$ we have
			\begin{equation}
    			\|h_{\infty}\|_{L^p}\leq\abs{M}^{\frac1p}
			\end{equation}
		Thus, using the fact that $\displaystyle \lim_{p\rightarrow\infty}\|h_{\infty}\|_{L^p}=\|h_{\infty}\|_{L^{\infty}}$, we see that $\|h_{\infty}\|_{L^{\infty}}\leq1$. In particular, since the weak gradient of $f_{\infty}$ is $h_{\infty}$, we see that $f_\infty$ is 1-Lipschitz.
		Next, as each $f_{q}-\bar{f}_{q}$ converges to $f_{\infty}$ in $W^{1,p}$ for all $p$, by picking $p$ to be large enough, we can ensure that this convergence is uniform by applying Morrey's inequality to $f_{\infty}-\bigl(f_{q}-\bar{f}_{q}\bigr)$. In particular, the convergence is pointwise, so
			\begin{equation}
    			\lim_{q\rightarrow\infty}|f_{q}(x)-f_{q}(y)|=\lim_{q\rightarrow\infty}|f_{q}(x)-\bar{f}_q-(f_{q}(y)-\bar{f}_q)|=|f_{\infty}(x)-f_{\infty}(y)|.
			\end{equation}
		Thus, for any $\varepsilon>0$ we have that
			\begin{equation}
				\limsup_{q\rightarrow\infty}d_{q,g,g_0}^{ D}(x,y)\leq d_{g}(x,y)+\varepsilon. 
			\end{equation}
		On the otherhand, it is well known that
			\begin{equation}
				d_{g}(x,y)=\sup\left\{\abs{f(x)-f(y)}:\Lip_{d_{g}}(f)\leq1\right\}.
			\end{equation}
		Let $f_{\varepsilon}$ be a 1-Lipschitz function with respect to $d_g$ such that
		\begin{equation}
			d_{g}(x,y)\leq\abs{f_{\varepsilon}(x)-f_{\varepsilon}(y)}+\varepsilon.
		\end{equation}
		Then, for all $p$ we have that $\tilde{f}_{p,\varepsilon}=\abs{M}^{-\frac{1}{p}}f_{\varepsilon}$ satisfies
		\begin{equation}
			\int\abs{\nabla\tilde{f}_{p,\varepsilon}}^pdV_{g}\leq 1
		\end{equation}
		and
		\begin{equation}
			\mathcal{H}_{p,g}(\tilde{f}_{p,\varepsilon})\leq\abs{M}^{-\frac{1}{p}}\Diam_g(M)^{\frac{n}{p}}=\left(\frac{\Diam_{g}(M)^n}{\abs{M}_{g}}\right)^{\frac1p}.
		\end{equation}
        For large enough $p$, the term on the right hand side will be arbitrarily close to $1$. Since $d_g\leq D_0 d_{g_0}$ we have  for $0<\delta<1$ that $H_{p,g_0}\bigl(\tilde{f}_{p,\varepsilon}\bigr)\leq D_{0}^{\frac{p-n}{p}}+\delta\leq D$. Thus, our choice of $D$ ensures that $\Tilde{f}_{p,\varepsilon}$ is a test function for every $p$ large enough. Thus, we have that
		\begin{equation}
			d^{D}_{p,g,g_0}(x,y)\geq \bigl|\tilde{f}_{p,\varepsilon}(x)-\tilde{f}_{p,\varepsilon}(y)\bigr|\geq \abs{M}^{-\frac{1}{p}}(d_{g}(x,y)+\varepsilon).
		\end{equation}
		This gives us that
		\begin{equation}
			\liminf_{p\rightarrow\infty}d^{D}_{p,g,g_0}(x,y)\geq d_{g}(x,y)+\varepsilon,
		\end{equation}
		and so the result follows.
	\end{proof}
\begin{cor}
    Let $(M,g)$, $(M,g_0)$ be compact, connected, oriented Riemannian manifolds, and $(M,d_{p,g,g_0})$ the corresponding modified $d_p$ distance metric space. If $d_g \le D_0 d_{g_0}$ and $D \ge D_0+2$, then $d^D_{p,g,g_0}$ converges uniformly to $d_g$ as $p$ tends to infinity.
\end{cor}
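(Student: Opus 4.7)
The plan is to upgrade the pointwise convergence of Lemma \ref{PointwiseConvergence} to uniform convergence on the compact space $M\times M$ via an Arzela-Ascoli argument. The main point is that the definition of the modified $d_p$ distance already enforces a uniform H\"older bound on the family $\{d_{p,g,g_0}^D\}_p$, which is enough to give equicontinuity independently of $p$.

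First I would observe that since every admissible function $f$ in the supremum defining $d_{p,g,g_0}^D$ satisfies $|f(x)-f(y)|\le D\,d_{g_0}(x,y)^{(p-n)/p}$, the resulting distance inherits the same bound,
\begin{equation}
d_{p,g,g_0}^{D}(x,y)\le D\,d_{g_0}(x,y)^{(p-n)/p}.
\end{equation}
Then the triangle inequality for $d_{p,g,g_0}^D$, applied to pairs $(x_1,y_1),(x_2,y_2)\in M\times M$, yields
\begin{equation}
\bigl|d_{p,g,g_0}^{D}(x_1,y_1)-d_{p,g,g_0}^{D}(x_2,y_2)\bigr|\le D\bigl(d_{g_0}(x_1,x_2)^{(p-n)/p}+d_{g_0}(y_1,y_2)^{(p-n)/p}\bigr).
\end{equation}
Since for $\delta\in[0,1]$ the map $p\mapsto\delta^{(p-n)/p}=\delta^{1-n/p}$ is non-increasing in $p$, one has $\delta^{(p-n)/p}\le\delta^{1/2}$ whenever $p\ge 2n$, so after rescaling $d_{g_0}$ by its diameter the right-hand side furnishes a $\tfrac12$-H\"older modulus of continuity which is uniform in $p\ge 2n$.

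Finally I would combine this equicontinuity with the pointwise convergence $d_{p,g,g_0}^D(x,y)\to d_g(x,y)$ established in Lemma \ref{PointwiseConvergence} together with the compactness of $M\times M$. By the Arzela-Ascoli theorem every subsequence admits a uniformly convergent sub-subsequence, and since the pointwise limit forces the sub-subsequential limit to be $d_g$, the whole family must converge uniformly. The only real subtlety is the $p$-dependence of the H\"older exponent $(p-n)/p$, and this is defused by the monotonicity observation above, which reduces the equicontinuity question, uniformly in $p\ge 2n$, to a single fixed $\tfrac12$-H\"older estimate on the compact set $M\times M$.
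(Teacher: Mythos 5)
Your proof is correct and takes essentially the same route as the paper: the paper invokes Lemma \ref{lem:compactness_of_distance_functions} (the H\"older bound $d_{p,g,g_0}^D(x,y)\le D\,d_{g_0}(x,y)^{(p-n)/p}$ giving Arzela--Ascoli equicontinuity) together with the pointwise convergence of Lemma \ref{PointwiseConvergence}, exactly as you do. You merely make explicit the observation, which the paper states in one line, that the equicontinuity estimate persists uniformly as $p\to\infty$, via the monotonicity of $\delta\mapsto\delta^{(p-n)/p}$ for $\delta\le 1$.
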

\begin{proof}
 Notice that the conclusion of Lemma \ref{lem:compactness_of_distance_functions} holds even if we let the exponent $p$ approach infinity. This combined with the pointwise convergence in Lemma \ref{PointwiseConvergence} shows that the convergence is actually uniform.
\end{proof}

We now make a simple observation about how the modified $d_{p,g,g_0}^D$ distance scales. 

\begin{lem}
If $g_{\lambda}=\lambda^2g$ and $g_{0,\lambda}=\lambda^2g_0$ then we find that for all $x_1,x_2 \in M$ 
\begin{align}
d_{p,g_{\lambda},g_{0,\lambda}}(x_1,x_2) = \lambda^{\frac{p-n}{p}} d_{p,g,g_0}(x_1,x_2).
\end{align}
\end{lem}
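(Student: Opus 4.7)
The plan is a direct change-of-variables argument: I would exhibit an explicit bijection between the admissible functions for $d^{D}_{p,g,g_{0}}(x_1,x_2)$ and those for $d^{D}_{p,g_{\lambda},g_{0,\lambda}}(x_1,x_2)$ which scales the quantity $|f(x_{1})-f(x_{2})|$ by the factor $\lambda^{(p-n)/p}$.

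First I would record the scaling laws under the conformal rescaling $g\mapsto\lambda^{2}g$. Since $g_{\lambda}^{ij}=\lambda^{-2}g^{ij}$ and $dV_{g_{\lambda}}=\lambda^{n}dV_{g}$, for any smooth $f$ one has $|\nabla^{g_{\lambda}}f|_{g_{\lambda}}=\lambda^{-1}|\nabla^{g}f|_{g}$, hence
\begin{equation}
\int_{M}|\nabla^{g_{\lambda}}f|_{g_{\lambda}}^{p}\,dV_{g_{\lambda}}=\lambda^{n-p}\int_{M}|\nabla^{g}f|_{g}^{p}\,dV_{g}.
\end{equation}
Similarly, since $d_{g_{0,\lambda}}=\lambda\,d_{g_{0}}$, the H\"{o}lder seminorm satisfies
\begin{equation}
\mathcal{H}_{p,g_{0,\lambda}}(f)=\lambda^{-\frac{p-n}{p}}\mathcal{H}_{p,g_{0}}(f).
\end{equation}

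Next, given an admissible function $\tilde{f}$ for $d^{D}_{p,g,g_{0}}(x_1,x_2)$, I would set $f:=\lambda^{\frac{p-n}{p}}\tilde{f}$ and verify that $f$ is admissible for $d^{D}_{p,g_{\lambda},g_{0,\lambda}}(x_1,x_2)$. Indeed, by the two scaling identities above,
\begin{equation}
\int_{M}|\nabla^{g_{\lambda}}f|_{g_{\lambda}}^{p}\,dV_{g_{\lambda}}=\lambda^{n-p}\lambda^{p-n}\int_{M}|\nabla^{g}\tilde{f}|_{g}^{p}\,dV_{g}\leq 1,
\end{equation}
and
\begin{equation}
\mathcal{H}_{p,g_{0,\lambda}}(f)=\lambda^{-\frac{p-n}{p}}\cdot\lambda^{\frac{p-n}{p}}\mathcal{H}_{p,g_{0}}(\tilde{f})=\mathcal{H}_{p,g_{0}}(\tilde{f})\leq D.
\end{equation}
The inverse substitution $f\mapsto\lambda^{-\frac{p-n}{p}}f$ shows that this is in fact a bijection of admissible classes.

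Finally, since $|f(x_{1})-f(x_{2})|=\lambda^{\frac{p-n}{p}}|\tilde{f}(x_{1})-\tilde{f}(x_{2})|$ on corresponding pairs, taking suprema over the admissible classes produces the stated identity. There is essentially no obstacle here; the only thing to watch is that the constant $D$ is kept fixed on both sides, which is exactly why the rescaling exponent $\frac{p-n}{p}$ on $f$ is the correct one (it is precisely what makes the H\"{o}lder constraint invariant while also absorbing the $\lambda^{n-p}$ coming from the gradient $L^{p}$ norm).
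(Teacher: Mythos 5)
Your proof is correct and follows essentially the same route as the paper: you establish the scaling law $\int_M|\nabla^{g_\lambda}f|_{g_\lambda}^p\,dV_{g_\lambda}=\lambda^{n-p}\int_M|\nabla^g f|_g^p\,dV_g$ together with the Hölder seminorm scaling, rescale admissible functions by $\lambda^{(p-n)/p}$, and take suprema. The paper phrases the final step as proving one inequality and then reversing the roles of $g$ and $g_\lambda$, while you package it as a bijection of admissible classes, but these are the same argument.
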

\begin{proof}
We first observe that
\begin{align}
 \int_M |\nabla^g f|_{g_{\lambda}}^p dV_{g_{\lambda}}= \lambda^{n-p}\int_M |\nabla^g f|_g^p dV_{g}.
\end{align}
Hence if $f$ is the admissible function so that
\begin{align}
d_{p,g,g_0}(x_1,x_2)=|f(x_1)-f(x_2)|-\varepsilon,
\end{align}
we see that for $f_{\lambda}=\lambda^{\frac{p-n}{p}}f$ is admissible for $g_{\lambda}$, since additionally $\mathcal{H}_{p,g_{0,\lambda}}(f_{\lambda})\le D$, and so 
\begin{align}
d_{p,g_{\lambda},g_{0,\lambda}}(x_1,x_2)&\ge|f_{\lambda}(x_1)-f_{\lambda}(x_2)|
\\&=\lambda^{\frac{p-n}{p}}|f(x_1)-f(x_2)|
\\&=\lambda^{\frac{p-n}{p}}(d_{p,g,g_0}(x_1,x_2)+\varepsilon).
\end{align}
Since this is true for all $\varepsilon>0$ we find 
\begin{align}
d_{p,g_{\lambda},g_{0,\lambda}}(x_1,x_2) \ge \lambda^{\frac{p-n}{p}} d_{p,g,g_0}(x_1,x_2).
\end{align}
By reworking the argument from $g_{\lambda}$ to $g$ a second time one obtains the desired result.
\end{proof}

We now note a convergence result where we assume smooth convergence. This will be used in the following proposition.

\begin{lem}\label{lem:Smoothd_p}
    Let $(M,g_{i})$ and $(M,h_{i})$ be two sequences of compact, connected, oriented Riemannian manifolds such that $g_{i}$ smoothly converges to $g_{\infty}$ and $h_{i}$ smoothly converges to $h_{\infty}$. Then, we have that $d^{D}_{p,g_i,h_i}$ uniformly converges to $d^{D}_{p,g_{\infty},h_{\infty}}$. 
\end{lem}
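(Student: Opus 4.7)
The plan is to prove pointwise convergence $d^{D}_{p, g_i, h_i}(x, y) \to d^{D}_{p, g_\infty, h_\infty}(x, y)$ at each $(x, y) \in M \times M$ and then upgrade to uniform convergence. The upgrade is essentially free: by Lemma~\ref{lem:compactness_of_distance_functions} we have $d^{D}_{p, g_i, h_i}(x, y) \le D \, d_{h_i}(x, y)^{(p-n)/p}$, and smooth convergence $h_i \to h_\infty$ on the compact manifold $M$ gives uniform convergence $d_{h_i} \to d_{h_\infty}$; the family $\{d^{D}_{p, g_i, h_i}\}$ is therefore uniformly equicontinuous with respect to $d_{h_\infty}$, so Arzela-Ascoli promotes pointwise convergence to uniform convergence.

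For the liminf inequality, fix $\varepsilon > 0$ and pick an admissible test function $f$ for $d^{D}_{p, g_\infty, h_\infty}(x, y)$ with $|f(x) - f(y)| \ge d^{D}_{p, g_\infty, h_\infty}(x, y) - \varepsilon$. Smooth convergence of the metrics yields $\int_M |\nabla^{g_i} f|^p_{g_i} \, dV_{g_i} \to \int_M |\nabla^{g_\infty} f|^p_{g_\infty} \, dV_{g_\infty} \le 1$, and $d_{h_i}/d_{h_\infty} \to 1$ uniformly off the diagonal, whence $\mathcal{H}_{p, h_i}(f) \to \mathcal{H}_{p, h_\infty}(f) \le D$. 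Rescaling by $(1+\delta_i)^{-1}$ with $\delta_i \to 0$ chosen to absorb both errors produces an admissible function for $d^{D}_{p, g_i, h_i}$, and letting $\varepsilon \to 0$ gives $\liminf_{i} d^{D}_{p, g_i, h_i}(x, y) \ge d^{D}_{p, g_\infty, h_\infty}(x, y)$.

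For the limsup inequality, choose near-optimal admissible $f_i$ for $d^{D}_{p, g_i, h_i}(x, y)$ normalized by $f_i(x) = 0$. The Hölder bound $\mathcal{H}_{p, h_i}(f_i) \le D$ together with bounded $h_i$-diameter yields uniform boundedness and uniform equicontinuity with respect to $d_{h_\infty}$, so Arzela-Ascoli extracts a uniformly convergent subsequence $f_i \to f_\infty$. Smooth convergence $g_i \to g_\infty$ converts the bound $\int_M |\nabla^{g_i} f_i|^p_{g_i} \, dV_{g_i} \le 1$ into a uniform $W^{1, p}(M, g_\infty)$ bound, so along a further subsequence $\nabla^{g_\infty} f_i \rightharpoonup \nabla^{g_\infty} f_\infty$ weakly in $L^p$. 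Lower semicontinuity then gives $\int_M |\nabla^{g_\infty} f_\infty|^p_{g_\infty} \, dV_{g_\infty} \le 1$, and passage to the limit in the Hölder inequality (using uniform convergence $d_{h_i} \to d_{h_\infty}$) gives $\mathcal{H}_{p, h_\infty}(f_\infty) \le D$. Hence $f_\infty$ is admissible for $d^{D}_{p, g_\infty, h_\infty}(x, y)$, and $|f_\infty(x) - f_\infty(y)| = \lim |f_i(x) - f_i(y)|$ closes the argument.

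I expect the main technical obstacle to be the weak compactness step in the limsup direction: one must verify that the weak-$L^p$ limit of $\nabla^{g_\infty} f_i$ equals $\nabla^{g_\infty} f_\infty$, and that the gradient constraint survives the passage from $g_i$ to $g_\infty$. Both facts follow from smooth convergence on a compact manifold, but they require careful bookkeeping of error terms when converting between the $g_i$-based and $g_\infty$-based function spaces, and they depend essentially on the smoothness hypothesis rather than weaker notions of metric convergence.
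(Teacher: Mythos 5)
Your proposal follows essentially the same route as the paper: pre-compactness (via Lemma~\ref{lem:compactness_of_distance_functions} and the equicontinuity it encodes) reduces the problem to pointwise convergence, the liminf direction is handled by rescaling an admissible function for $d^{D}_{p,g_\infty,h_\infty}$ by constants tending to $1$, and the limsup direction extracts a uniform limit of near-optimal $f_i$ using the H\"older constraint and a weak $W^{1,p}(g_\infty)$ bound, closing with lower semicontinuity. Your version is slightly more explicit in places the paper elides (the normalization $f_i(x)=0$ needed for the $W^{1,p}$ bound to make sense, and the verification that both the gradient and H\"older constraints survive the limit), but no genuinely different idea is introduced.
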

\begin{proof}
    First, notice that the conclusion of Lemma \ref{lem:compactness_of_distance_functions} holds in this scenario, so that the family of metric functions is pre-compact. Therefore, in order to establish the result, it suffice to establish pointwise convergence.

    We begin by showing that for any $x,y$ in $M$ we have
    \begin{equation}\label{eq:easy_liminf}
        \liminf_{i\rightarrow\infty}d^{D}_{p,g_i,h_i}(x,y)\geq d^{D}_{p,g_{\infty},h_{\infty}}(x,y).
    \end{equation}
    To see this, consider any test function, say $f$, for the distance $d^{D}_{p,g_{\infty},h_{\infty}}$. Since both $g_i$ and $h_i$ converge smoothly to $g_{\infty}$ and $h_{\infty}$, respectively, it follows that one may pick a sequence of numbers $C_{i}$ tending towards $1$ such that $f_{i}=C_{i}f$ are test functions for $d^{D}_{p,g_i,h_i}$. Since $d^{D}_{p,g_i,h_i}(x,y)\geq \abs{f_i(x)-f_{i}(y)}$, we get \eqref{eq:easy_liminf}.

    Observe that since $g_i$ smoothly converges $g_{\infty}$, so do the inverse metrics. In particular, we can choose a subsequence such that
    \begin{equation}
        (1-i^{-1})^2\leq\frac{g^{-1}_{i}(df,df)}{g^{-1}_{\infty}(df,df)}\leq(1+i^{-1})^2
    \end{equation}
    for all $f$. Thus, we see that
    \begin{equation}
        (1-i^{-1})\leq\frac{\abs{\nabla^{g_i}f}_{g_i}}{\abs{\nabla^{g_{\infty}}f}_{g_{\infty}}}\leq(1+i^{-1})
    \end{equation}
    for all $f$.
    Similarly, we see that
    \begin{equation}
        (1+i^{-1})^{-n}\leq\frac{dV_{g_i}}{dV_{g_{\infty}}}\leq(1-i^{-1})^{-n}
    \end{equation}
    Now, let $f_{i}$ be a sequence of test functions for $d^{D}_{p,g_i,h_i}$ such that
    \begin{equation}
        d^{D}_{p,g_i,h_i}(x,y)\leq\abs{f_i(x)-f_{i}(y)}+i^{-1}.
    \end{equation}
    Then the sequence $f_{i}$ is uniformly bounded in $W^{1,p}(g_{\infty})$, and so a subsequence weakly converges to some limit $f_{\infty}$. Additionally, since the $h_i$ smoothly converge to $h_{\infty}$, we see that $\mathcal{H}_{p,h_{\infty}}(f_i)$ is uniformly bounded and hence an additional subsequence converges uniformly to $f_{\infty}$.  Finally, the lower-semicontinuity of the norm now implies that $f_{\infty}$ is a test function. Hence we see that
    \begin{equation}
       \limsup_{i\rightarrow \infty} d^{D}_{p,g_i,h_i}(x,y)\leq\abs{f_{\infty}(x)-f_{\infty}(y)}\leq d^{D}_{p,g_{\infty},h_{\infty}}(x,y).
    \end{equation}
\end{proof}

Now we would like to know whether or not having isometric modified $d_p$ distances is enough to know that the Riemannian metrics are isometric.
\begin{prop}
    Let $(M,g)$, $(M,h)$, and $(M,g_0)$ be compact , connected, oriented Riemannian manifolds. Then for $p > n$, $D>0$ if $(M,d_{p,g,g_0}^D)$, $(M,d_{p,h,g_0}^D)$ are isometric metric spaces then $(M,g)$ and $(M,h)$ are isometric Riemannian manifolds.
\end{prop}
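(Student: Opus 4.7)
The plan is to reduce the statement to the classical fact (Myers-Steenrod) that a distance-preserving bijection between smooth Riemannian manifolds is a Riemannian isometry. Let $\phi:M\to M$ be the distance-preserving bijection realizing the given isometry of modified $d_p$ metric spaces. First, one checks that $\phi$ is a homeomorphism of $M$: the upper bound $d^{D}_{p,g,g_0}(x,y)\leq Dd_{g_0}(x,y)^{(p-n)/p}$ combined with a linear lower bound $d^{D}_{p,g,g_0}(x,y)\geq cd_{g_0}(x,y)$ (obtained by plugging the rescaled distance function $z\mapsto\alpha d_{g_0}(x,z)$ into the supremum and using $p>n$) shows that both modified $d_p$ distances induce the manifold topology. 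It therefore suffices to prove that $\phi$ also preserves the underlying Riemannian distances, $d_h(\phi(x),\phi(y))=d_g(x,y)$ for all $x,y\in M$, after which Myers-Steenrod produces a smooth Riemannian isometry.

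The core step is the sharp small-scale asymptotic
\begin{equation*}
\lim_{y\to x}\frac{d^{D}_{p,g,g_0}(x,y)}{d_g(x,y)^{(p-n)/p}}=C(n,p),
\end{equation*}
where $C(n,p)$ is a universal constant depending only on $n$ and $p$, namely the sharp Morrey constant on $\mathbb{R}^n$. I would prove it by working in normal coordinates for $g$ at $x$ and invoking the scaling identity $d^{D}_{p,\lambda^2 g,\lambda^2 g_0}=\lambda^{(p-n)/p}d^{D}_{p,g,g_0}$ with $\lambda=d_g(x,y)^{-1}$, which reduces the asymptotic to computing the unrestricted Euclidean $d_p$ distance between two unit-separated points. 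For the upper bound one inserts a Euclidean Morrey extremizer cut off smoothly at distance comparable to $d_g(x,y)$; such a test function automatically satisfies the $\mathcal{H}_{p,g_0}$ H\"older constraint (being smooth and supported in a small ball with small $L^\infty$ oscillation) and its $L^p$-gradient is nearly sharp because $g$ is almost Euclidean there. For the matching lower bound one restricts any admissible $f$ to the same small near-Euclidean ball and applies the sharp Morrey inequality on the Euclidean model; the H\"older side-constraint is inactive at small scales, so only the pointwise value $g(x)$ enters the leading-order behavior.

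Granting the asymptotic and using continuity of $\phi$, the equality $d^{D}_{p,g,g_0}(x,y)=d^{D}_{p,h,g_0}(\phi(x),\phi(y))$ forces $d_h(\phi(x),\phi(y))/d_g(x,y)\to 1$ as $y\to x$. A standard length-space argument (dividing a minimizing $g$-geodesic into short segments, applying the infinitesimal estimate on each, and summing) then upgrades this to the global identity $d_h(\phi(x),\phi(y))=d_g(x,y)$ and Myers-Steenrod concludes the proof. The principal obstacle is the sharp asymptotic with a universal constant: one must verify carefully that the H\"older side-constraint does not perturb the leading term and that near-optimal test functions in the defining supremum can be taken essentially supported in a shrinking coordinate ball about $x$, so that only the germ of $g$ at $x$ matters at leading order. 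A clean technical route is to use Lemma \ref{lem:Smoothd_p} on an auxiliary closed manifold that agrees with the blow-up of $(M,g)$ near $x$, which converts the local statement into a smooth-convergence statement where the lemma applies directly.
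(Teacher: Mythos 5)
Your proposal follows essentially the same route as the paper: reduce to a small-scale asymptotic via the scaling identity $d^D_{p,\lambda^2 g,\lambda^2 g_0}=\lambda^{(p-n)/p}d^D_{p,g,g_0}$, blow up at a point so the rescaled metrics converge to constant metrics, pass to the limit using Lemma~\ref{lem:Smoothd_p} (the analogue of Theorem~1.7 of \cite{LNN}), and then invoke Myers--Steenrod. The paper's own proof is a one-paragraph sketch that points directly at Proposition~2.34 of \cite{LNN} and names exactly the two modifications you describe (rescaling both $g$ and $g_0$, and applying the smooth-convergence lemma on rescaled balls), so your write-up is a more detailed version of the same argument rather than a different one.

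One caveat worth flagging, since it is glossed over both in your sketch and in the paper: your central claim
\begin{equation*}
\lim_{y\to x}\frac{d^{D}_{p,g,g_0}(x,y)}{d_g(x,y)^{(p-n)/p}}=C(n,p)
\end{equation*}
asserts a constant depending only on $n,p$. After the blow-up, the metric $d_i^{-2}g$ becomes Euclidean in $g$-normal coordinates, but $d_i^{-2}g_0$ becomes the constant metric $g_0(x)$ expressed in those same coordinates, which need not be the identity. So what the smooth-convergence lemma actually gives is the limit $d^{D}_{p,\delta,g_0(x)}(o,e_1)$, which a priori depends on both $D$ and on how $g_0(x)$ sits relative to $g(x)$. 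You say the H\"older side-constraint is ``inactive at small scales,'' but the exact scaling identity shows that the constraint $\mathcal{H}_{p,d_i^{-2}g_0}\le D$ survives the blow-up as $\mathcal{H}_{p,g_0(x)}\le D$; it weakens only if $D$ exceeds a Morrey-type threshold depending on the comparison between $g_0(x)$ and $g(x)$. For the argument to close for \emph{arbitrary} $D>0$ one must either show that the limiting constant cancels when passing between $x$ and $\phi(x)$, or restrict $D$. Since the paper defers entirely to \cite{LNN} for this, your proposal is at the same level of rigor as the paper's; but if you were to write it out in full, this is the point that needs an honest argument rather than the assertion that the constraint drops out.

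Minor point: your linear lower bound $d^D_{p,g,g_0}(x,y)\geq c\,d_{g_0}(x,y)$ obtained by testing with a rescaled $d_{g_0}(x,\cdot)$ is correct — the $L^p$-gradient constraint is met for small enough $\alpha$ since $\int|g^{-1}|_{g_0}^{p/2}\,dV_g<\infty$ on a compact manifold, and $\mathcal{H}_{p,g_0}(\alpha d_{g_0}(x,\cdot))\le\alpha\,\Diam(M,g_0)^{n/p}$ is likewise controlled. This establishes the topological equivalence you need before invoking Myers--Steenrod.
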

\begin{proof}
    The same proof as in Proposition 2.34 of \cite{LNN} works in our case with the following modifications. At the step where they analyze the $d_{p,g_i}$ distance of rescaled metrics $g_i=d_i^{-2}g$, we analyze $d_{p,d_i^{-2}g,d_i^{-2}g_0}$. One sees that $d_{p,d_i^{-2}g,d_i^{-2}g_0}(x,y)$ is equal to $d^{\frac{n-p}{p}}_id_{p,g,g_0}(x,y)$. Next, at the step when they apply Theorem 1.7 of \cite{LNN} we would apply Lemma \ref{lem:Smoothd_p} on re-scaled metric balls using the fact that both $d^{-2}_{i}g$ and $d^{-2}_ig_0$ uniformly converge to the Euclidean metric.
\end{proof}

\subsection{Useful Technical Lemmas}

Using the theory in \cite{gigli2020lectures} one finds a useful characterization of the norm of the gradient of a function  $f\in W^{1,p}$ as the essential infimum of upper gradients of $f$. We use this characterization as an important technical point in the proof of Theorem \ref{thm:pointwiseBelow}. In fact, in the theory developed in \cite{gigli2020lectures}, it is shown that a function's minimal upper gradient has this characterization. The result then follows by showing that for Sobolev functions, the minimal upper gradient and the norm of the weak gradient coincide almost everywhere. The proof of this result for Euclidean space is the content of Corollary 2.1.24 together with Proposition 2.1.36 and Theorem 2.1.37 in \cite{gigli2020lectures}. Although \cite{gigli2020lectures} only develops the theory of $W^{1,2}$ explicitly, the machinery developed there allows one to define $W^{1,p}$ for all $p\geq1$. The proof below is a modification of the approach found in \cite{gigli2020lectures}.
\begin{prop}\label{prop: Technical Gradient Check}
    The Sobolev space defined in \cite{gigli2020lectures} and the standard Sobolev space agree on smooth Riemannian manifolds. In addition, this implies that the norm of the gradient of a function $f\in W^{1,p}$ is the essential infimum of upper gradients of $f$.
\end{prop}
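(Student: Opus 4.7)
The plan is to localize the problem via a partition of unity and reduce the claim to the Euclidean statements already cited from \cite{gigli2020lectures}, then use the lower semicontinuity of Gigli's minimal upper gradient to obtain the global identification. Fix a locally finite atlas $\{(U_\alpha,\varphi_\alpha)\}$ of the smooth manifold $M$ and a subordinate smooth partition of unity $\{\chi_\alpha\}$, chosen so that on each chart the pushforward metric $(\varphi_\alpha)_\ast g$ is bi-Lipschitz equivalent to the standard Euclidean metric.

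First I would show that the standard Sobolev space embeds into the Gigli Sobolev space, with Gigli's minimal $p$-weak upper gradient $|Df|_\ast$ at most $|\nabla^g f|_g$ almost everywhere. Given $f \in W^{1,p}(M,g)$ in the standard sense, use chart-based mollification to produce smooth $f_k \to f$ in $W^{1,p}(M,g)$. For each smooth $f_k$, the fundamental theorem of calculus applied along any absolutely continuous curve $\gamma:[a,b]\to M$ yields
\begin{equation}
|f_k(\gamma(b)) - f_k(\gamma(a))| \leq \int_a^b |\nabla^g f_k|_g(\gamma(t))\, |\dot{\gamma}(t)|_g \, dt,
\end{equation}
so $|\nabla^g f_k|_g$ is a classical upper gradient of $f_k$, hence a $p$-weak upper gradient. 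The $L^p$-lower semicontinuity of the minimal $p$-weak upper gradient, whose proof carries over verbatim from the Euclidean setup of \cite{gigli2020lectures}, delivers the claimed inequality $|Df|_\ast \leq |\nabla^g f|_g$ a.e.

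For the reverse inclusion and the opposite pointwise inequality, work chart by chart. Apply Corollary 2.1.24, Proposition 2.1.36, and Theorem 2.1.37 of \cite{gigli2020lectures} to the compactly supported function $(\chi_\alpha f)\circ\varphi_\alpha^{-1}$ on $\varphi_\alpha(U_\alpha)\subset\R^n$; these results give equality of the two Sobolev space definitions in each chart, together with the identification of the Gigli minimal upper gradient with the Euclidean weak gradient. The bi-Lipschitz equivalence of $(\varphi_\alpha)_\ast g$ with the Euclidean metric then transfers the identification to $(U_\alpha,g)$. Summing the local identities via the decomposition $\nabla^g f = \sum_\alpha \nabla^g(\chi_\alpha f) - f \sum_\alpha \nabla^g \chi_\alpha$, in which the second sum vanishes because $\sum_\alpha \chi_\alpha \equiv 1$, one concludes $|\nabla^g f|_g \leq |Df|_\ast$ a.e.\ on $M$, and hence equality together with the identification of the two Sobolev spaces. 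The second assertion of the proposition is then an immediate consequence of Proposition 2.1.36 of \cite{gigli2020lectures}, which says precisely that the minimal upper gradient equals the essential infimum of upper gradients.

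I expect the main obstacle to be the chart-to-global step: absolutely continuous curves on $M$ may traverse many charts, so one must verify that the upper gradient property for the localized pieces $\chi_\alpha f$ in each $U_\alpha$ reassembles correctly into the upper gradient property for $f$ on $M$, with the partition-of-unity error terms absorbed by $\sum_\alpha \nabla \chi_\alpha = 0$. A secondary technical concern is that \cite{gigli2020lectures} develops the theory primarily in the $W^{1,2}$ case; one must confirm that the cited Euclidean statements extend to general $p>1$, which is the case because the arguments there rely only on reflexivity and uniform convexity of $L^p$ rather than on any Hilbert structure.
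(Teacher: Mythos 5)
Your proposal takes a genuinely different route from the paper, but the key step has a gap. The paper handles the hard direction (that the minimal upper gradient $|Df|$ is no smaller than $|\nabla^g f|_g$ for smooth $f$) intrinsically: near a point where $|\nabla f|>0$, it uses the flow of $\nabla f/|\nabla f|$ to construct a measured test family of curves along which the defining inequality for $|Df|$ is compared directly against the fundamental theorem of calculus for $|\nabla f|$, forcing $|\nabla f|\le|Df|$ a.e.\ without ever passing to charts. Your mollification-plus-lower-semicontinuity argument for the converse $|Df|_*\le|\nabla^g f|_g$ is sound and matches the paper's easy direction, and your remark about extending \cite{gigli2020lectures} from $W^{1,2}$ to general $p>1$ is correct.

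The gap is the transfer step. You localize via $(\chi_\alpha f)\circ\varphi_\alpha^{-1}$, invoke the Euclidean identification from \cite{gigli2020lectures}, and then claim that bi-Lipschitz equivalence of $(\varphi_\alpha)_*g$ with the Euclidean metric "transfers the identification." But bi-Lipschitz equivalence preserves minimal upper gradients only up to the bi-Lipschitz constant: if $\psi$ is $L$-bi-Lipschitz and $u$ is an upper gradient of $h$, then $L\,(u\circ\psi)$ is an upper gradient of $h\circ\psi$, yielding only the two-sided comparison $L^{-1}\le |D(h\circ\psi)|_*\,/\,(|Dh|_*\circ\psi)\le L$. That is enough to identify the Sobolev spaces as sets with equivalent norms (the first assertion), but it does \emph{not} give the pointwise identity $|Df|_*=|\nabla^g f|_g$ a.e., which is the second assertion and is precisely what is used later in the proof of Theorem~\ref{thm:pointwiseBelow}. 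Recovering the sharp pointwise identity for a chart metric that is merely bi-Lipschitz to Euclidean requires, in effect, re-running an argument like the paper's gradient-flow construction, so the chart reduction does not circumvent it. The reassembly step you sketch also points the wrong way: subadditivity of the minimal upper gradient gives $|Df|_*\le\sum_\alpha |D(\chi_\alpha f)|_*$, which even together with the local identities cannot yield the needed bound $|\nabla^g f|_g\le |Df|_*$.
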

\begin{proof}
    Let us first fix some notation. For $f\in W^{1,p}\cap C^{\infty}(M)$ we let $\nabla f$ denote the classical gradient of $f$. For $f$ in the Sobolev space $S^p\cap L^p$ as defined in \cite{gigli2020lectures}, we let $\abs{Df}$ denote the minimal weak upper-gradient of $f$.

    We first claim that for $f\in C^{\infty}(M)$, we have that $\abs{\nabla f}=\abs{Df}$ almost everywhere. To see this, first observe that by definition  $\abs{\nabla f}$ is an upper gradient for $f$, and so we must have that $\abs{Df}\leq \abs{\nabla f}$ almost everywhere. Let $X=\left\{x:\abs{Df}<\abs{\nabla f}\right\}$. Thus, we see that for almost every $x$ such that $\abs{\nabla f}(x)=0$, we actually have equality. So, we may focus on the set $Y=X\cap\left\{x:\abs{\nabla f}(x)>0\right\}$. For any $x$ such that $\abs{\nabla f}(x)>0$, we may find a $\delta>0$ and an open neighborhood $U$ about $x$, with compact closure, such that for all $y\in U$ we have $\abs{\nabla f}(y)>\delta$. Let $\Phi$ be the flow generated by $\frac{\nabla f}{\abs{\nabla f}}$ with initial points in $U$. Since $U$ has compact closure and $\abs{\nabla f}$ is continuous, there exists $\varepsilon>0$ such that $\Phi:U\times[0,\varepsilon]\rightarrow M$ exists, and is smooth. In fact, by picking $\varepsilon>0$ small enough we ensure that the Jacobian is non-vanishing, and so we may guarantee that the map $y\rightarrow \Phi(y,t)$ defines a test family of curves with measure $\frac{1}{\abs{U}_g}\Vol_{g}|_{U}$ in the sense of \cite{gigli2020lectures}. Consider now that on one hand
    \begin{equation}\label{eq:grad_f}
        \int_{U}f\left(\Phi(y,\varepsilon)\right)-f\left(\Phi(y,0)\right) dV_g=\int_{U}\int^{\varepsilon}_{0}\abs{\nabla f}\left(\Phi(y,t)\right)dt dV_g,
    \end{equation}
    while on the other hand we have by definition of $\abs{Df}$ that
    \begin{equation}\label{eq:upper_grad_f}
        \int_{U}f\left(\Phi(y,\varepsilon)\right)-f\left(\Phi(y,0)\right) dV_g\leq\int_{U}\int^{\varepsilon}_{0}\abs{Df}\left(\Phi(y,t)\right)dt dV_g.
    \end{equation}
    Notice that $\Phi(y,t)$ is the evaluation map on the family of curves as in \cite{gigli2020lectures}. Then, by subtracting Equation \eqref{eq:upper_grad_f} from Equation \eqref{eq:grad_f} and using the fact that $\abs{\nabla f}\geq \abs{Df}$ almost everywhere, we see $Y\cap U$ has measure zero. This in turn shows that $X$ has measure zero. Thus, $\abs{\nabla f}=\abs{Df}$ almost everywhere.

    Now, it follows directly from a partition of unity argument and the proof of Proposition 2.1.36 C) in \cite{gigli2020lectures} that for any $f\in W^{1,p}$ we may find a sequence of smooth functions $f_j$ and $G_j$ such that $f_j\rightarrow f$ in $W^{1,p}$, $G_j\rightarrow \abs{Df}$ in $L^p$ and
    \begin{equation}
        \abs{Df_j}\leq G_{j}.
    \end{equation}
    Since $\abs{Df_j}=\abs{\nabla f_j}$ almost everywhere, we actually have that
    \begin{equation}\label{eq:grad_upper_bound_f}
        \abs{\nabla f_j}\leq G_j
    \end{equation}
    almost everywhere. Since $f_{j}\rightarrow f$ and $\abs{\nabla f_{j}}\rightarrow\abs{\nabla f}$ in $L^{p}$, it follows from \cite{gigli2020lectures} that $\abs{\nabla f}$ is an upper gradient for $f$, and so $\abs{Df}\leq\abs{\nabla f}$. However, from Equation \eqref{eq:grad_upper_bound_f} the opposite inequality must also hold. Therefore, we have equality almost everywhere.
\end{proof}

We will often want to compute the norm of $g$ and $g^{-1}$ with respect to a background metric $g_0$ so we review the definition of these norms.

\begin{defn}\label{def-NormDeterminant}
    Let $g,g_0$ be Riemannian metrics so that $e_1,...,e_n$ is an orthonormal frame with respect to $g_0$ that diagonalizes $g$, i.e.
    \begin{align}
        g(e_i,e_i)=\lambda_i^2,
    \end{align}
    for eigenvalues $0\le \lambda_1 \le \lambda_2 \le ... \le \lambda_n$ then
    \begin{align}
        |g|_{g_0}&= \sqrt{\lambda_1^4+...+\lambda_n^4 },
        \\ |g^{-1}|_{g_0}&= \sqrt{ \lambda_1^{-4}+...+\lambda_n^{-4} },
    \end{align}
    and
    \begin{align}
        \det(g)_{g_0}&=\lambda_1^2...\lambda_n^2,
        \\\det(g^{-1})_{g_0}&=\lambda_1^{-2}...\lambda_n^{-2}.
    \end{align}
\end{defn}

We now establish a simple way of estimating the determinant of $g$ by the norm of $g$, with respect to $g_0$.

\begin{lem}
    Let $g,g_0$ be Riemannian metrics  then
    \begin{align}
        \det(g)_{g_0} &\le n^{-\frac{n}{2}}|g|_{g_0}^{n},
        \\ \det(g^{-1})_{g_0} &\le n^{-\frac{n}{2}}|g^{-1}|_{g_0}^{n},
    \end{align}
\end{lem}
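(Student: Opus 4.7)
The plan is to reduce both inequalities to the classical AM-GM inequality applied to the eigenvalues $\lambda_i^4$ (respectively $\lambda_i^{-4}$) of $g$ relative to $g_0$ as given in Definition \ref{def-NormDeterminant}. Since $g,g_0$ are simultaneously diagonalizable in the sense of the definition, both sides of each inequality are explicit symmetric functions of the $\lambda_i$, so no geometric machinery is needed beyond that setup.

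Concretely, I would first rewrite the target inequality $\det(g)_{g_0}\le n^{-n/2}|g|_{g_0}^n$ as
\begin{align}
\lambda_1^2\cdots\lambda_n^2\le n^{-n/2}\left(\lambda_1^4+\cdots+\lambda_n^4\right)^{n/2}.
\end{align}
Raising both sides to the power $2/n$, this is equivalent to
\begin{align}
\left(\lambda_1^4\cdots\lambda_n^4\right)^{1/n}\le \frac{1}{n}\left(\lambda_1^4+\cdots+\lambda_n^4\right),
\end{align}
which is precisely AM-GM applied to the nonnegative numbers $\lambda_1^4,\ldots,\lambda_n^4$. Since all quantities involved are nonnegative, taking $n/2$ powers is monotonic and the inequality is recovered in its original form.

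For the second inequality, I would substitute $\mu_i=\lambda_i^{-1}$ (when the $\lambda_i$ are strictly positive, as is the case for a Riemannian metric) and apply the identical argument: AM-GM on $\mu_1^4,\ldots,\mu_n^4$ gives $(\mu_1^4\cdots\mu_n^4)^{1/n}\le \frac{1}{n}(\mu_1^4+\cdots+\mu_n^4)$, which after raising to the $n/2$ power becomes $\det(g^{-1})_{g_0}\le n^{-n/2}|g^{-1}|_{g_0}^n$. There is no real obstacle here; the only mild point worth flagging is that the two inequalities really are independent consequences of AM-GM rather than of each other, since the exponents in $|g|_{g_0}$ and $|g^{-1}|_{g_0}$ are symmetric in a way that matches the determinants only after passing through AM-GM.
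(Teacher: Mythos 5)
Your argument is correct, and it is slightly more direct than the paper's. The paper applies AM-GM (phrased as the ``determinant-trace inequality'') to the numbers $\lambda_1^2,\dots,\lambda_n^2$, producing the bound $\det(g)_{g_0}^{1/n}\le\frac1n\sum_i\lambda_i^2$, and then separately invokes the $\ell^1$--$\ell^2$ comparison $\sum_i\lambda_i^2\le\sqrt n\bigl(\sum_i\lambda_i^4\bigr)^{1/2}$ to reach $n^{-1/2}|g|_{g_0}$; you instead apply AM-GM directly to $\lambda_1^4,\dots,\lambda_n^4$, which lands on $\det(g)_{g_0}^{2/n}\le\frac1n|g|_{g_0}^2$ in a single step and removes the need for the $\ell^1$--$\ell^2$ inequality entirely. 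The two routes arrive at the same constant $n^{-n/2}$, so nothing is lost in tightness, and the treatment of the second inequality via $\mu_i=\lambda_i^{-1}$ mirrors the paper's remark that ``the second inequality will follow by a similar argument.'' Your closing observation that the two inequalities are parallel applications of AM-GM rather than consequences of one another is accurate and worth keeping.
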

\begin{proof}
    We will prove the first inequality and the second inequality will follow by a similar argument. By Definition \ref{def-NormDeterminant} we can consider the diagonal matrix with entries $\lambda_1^2,...,\lambda_n^2$ and by the determinant trace inequality we find
    \begin{align}
    \det(g)_{g_0}^{\frac{1}{n}}= (\lambda_1^2...\lambda_n^2)^{\frac{1}{n}} \le \frac{1}{n} (\lambda_1^2+...+\lambda_n^2),  
    \end{align}
and now we use the fact that the $\ell_1$ norm is bounded by $\sqrt{n}$ times the $\ell_2$ norm to find
\begin{align}
    \det(g)_{g_0}^{\frac{1}{n}} \le \frac{1}{n} \sqrt{n}\sqrt{\lambda_1^4+...+\lambda_n^4}=n^{-\frac{1}{2}}|g|_{g_0},  
    \end{align}
    and the result follows by raising both sides to the $n$th power.
\end{proof}

We now establish a useful observation for when one is comparing norms of gradients with respect to two different metrics.

\begin{lem}\label{lem:MetricCauchy}
Let $g$ be a Riemannian metric, $f: M \rightarrow \R$ a function, and $\omega$ a $(0,2)$ tensor then
\begin{align}
\omega(df,df) \le |\omega|_{g}|\nabla^{g}f|_{g}^2.
\end{align}
In addition, if $h$ is another Riemannian metric then
\begin{align}
    |g^{-1}|_h=|h|_g.
\end{align}
\end{lem}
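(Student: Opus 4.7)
The plan is to handle the two claims separately using linear algebra in orthonormal frames at a single point.

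For the first inequality, my first step would be to pin down the notation: since $\omega$ is a $(0,2)$ tensor and $df$ is a $1$-form, $\omega(df,df)$ should be read as $\omega(\nabla^g f, \nabla^g f)$ after $g$ is used to raise the index on $df$. I would then fix a $g$-orthonormal frame at the point of interest, so that $|\omega|_g^2 = \sum_{ij}\omega_{ij}^2$ and $|\nabla^g f|_g^2 = \sum_i (X^i)^2$, where $X^i$ are the frame components of $\nabla^g f$. The estimate then reduces to a single application of the Cauchy--Schwarz inequality to the double sum $\sum_{ij}\omega_{ij}X^iX^j$, using the elementary observation that
\begin{equation*}
\sqrt{\sum_{ij}(X^i X^j)^2} = \sqrt{\Bigl(\sum_i (X^i)^2\Bigr)\Bigl(\sum_j (X^j)^2\Bigr)} = \sum_i (X^i)^2.
\end{equation*}

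For the second identity I would appeal to simultaneous diagonalization of the pair $(g,h)$. Using the spectral theorem for the symmetric bilinear form $g$ on the $h$-inner product space $T_xM$, I would first choose an $h$-orthonormal frame $e_1,\ldots,e_n$ diagonalizing $g$, with $g(e_i,e_i) = \lambda_i^2$. Reading off Definition \ref{def-NormDeterminant} with background metric $h$ then immediately gives $|g^{-1}|_h = \sqrt{\sum_i \lambda_i^{-4}}$. Next I would rescale to $\tilde e_i := \lambda_i^{-1} e_i$, which is $g$-orthonormal and in which $h$ is diagonal with entries $h(\tilde e_i, \tilde e_i) = \lambda_i^{-2}$. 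A second application of Definition \ref{def-NormDeterminant}, now with background $g$, produces $|h|_g = \sqrt{\sum_i \lambda_i^{-4}}$, matching the previous expression.

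I do not anticipate any serious obstacle. The only points that require care are (i) fixing the notational convention for $\omega(df,df)$ and (ii) verifying that the simultaneous diagonalization of $g$ and $h$ is available, which is standard because $h$ is positive-definite.
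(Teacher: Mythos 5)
Your proof is correct and takes essentially the same approach as the paper: the first inequality is the Cauchy--Schwarz inequality applied to the tensor inner product (the paper phrases this via $g(\omega,\nabla^g f\otimes\nabla^g f)$, you via the double sum in a $g$-orthonormal frame, which are the same computation), and the second identity is a direct algebraic verification (the paper relabels indices in $h_{ij}h_{pq}g^{ip}g^{jq}$, you use simultaneous diagonalization, both yielding $\sqrt{\sum_i\lambda_i^{-4}}$). Your remark about the notational convention for $\omega(df,df)$ is a fair clarification of a minor looseness in the paper's statement and does not affect the result.
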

\begin{proof}
We define a $(0,2)$ tensor $\nabla^{g}f \otimes \nabla^{g}f$ so that if $\alpha,\beta$ are $1-$ forms then
\begin{align}
(\nabla^{g}f \otimes \nabla^{g}f)(\alpha,\beta) = \alpha(\nabla^{g}f) \beta(\nabla^{g}f),
\end{align}
and hence for $dx_i, dx_j$ we find
\begin{align}
(\nabla^{g}f \otimes \nabla^{g}f)(dx_i,dx_j) = dx_i(\nabla^{g}f) dx_j(\nabla^{g}f)= g^{il}\partial_lf g^{jk}\partial_kf.
\end{align}
Now we notice that 
\begin{align}
g(\omega, \nabla^{g}f \otimes \nabla^{g}f) \le |\omega|_{g} |\nabla^{g}f \otimes \nabla^{g}f|_{g} = |\omega|_{g}|\nabla^{g}f|_{g}^2,
\end{align}
and lastly we see that
\begin{align}
g(\omega, \nabla^{g}f \otimes \nabla^{g}f)&=g_{pj}g_{qm}\omega^{pq} g^{ij} \partial_if g^{lm}\partial_lf
\\&=\delta_p^i\delta_q^l \omega^{pq}\partial_if\partial_lf=\omega(df,df).
\end{align}
For the second result, we can calculate
\begin{align}
    |g^{-1}|_h=h_{ij}h_{pq}g^{ip}g^{jq}=|h|_{g}.
\end{align}
\end{proof}

We now also review a simple inequality which will be used repeatedly in the next section. We suspect this is a well known theorem but give a short proof for completeness.

\begin{prop}\label{prop:reverse_triangle_ineq_snowflake}
    Let $a,b$ be in $\R$ and let $t\in(0,1]$. Then, we have that
    \begin{equation}
        \abs{\abs{a}^{t}-\abs{b}^t}\leq\abs{a-b}^t.
    \end{equation}
\end{prop}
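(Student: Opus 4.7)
The plan is to reduce the inequality to the sub-additivity of the concave function $x \mapsto x^t$ on $[0,\infty)$. Without loss of generality I assume $|a| \ge |b|$, so that it suffices to prove $|a|^t - |b|^t \le |a-b|^t$; the other case follows by swapping the roles of $a$ and $b$. Applying the ordinary triangle inequality on $\R$ gives $|a| = |(a-b) + b| \le |a-b| + |b|$, and since $x \mapsto x^t$ is non-decreasing on $[0,\infty)$, this upgrades to $|a|^t \le (|a-b| + |b|)^t$.

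The heart of the argument is therefore the sub-additivity statement $(u+v)^t \le u^t + v^t$ for $u, v \ge 0$ and $t \in (0,1]$. Applied with $u = |a-b|$ and $v = |b|$, it produces $|a|^t \le |a-b|^t + |b|^t$, which rearranges to the desired bound.

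To prove sub-additivity I plan to normalize. If $u + v = 0$ there is nothing to show, so assume $u + v > 0$ and set $x = u/(u+v)$ and $y = v/(u+v)$, so that $x, y \in [0,1]$ and $x + y = 1$. Dividing through by $(u+v)^t$ reduces the claim to $x^t + y^t \ge 1$. Since $x \le 1$ and $t \le 1$ imply $x^t \ge x^1 = x$ (and likewise for $y$), we immediately obtain $x^t + y^t \ge x + y = 1$.

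The statement is elementary and I do not anticipate any real obstacle. The one point worth flagging is that the naive approach via the reverse triangle inequality $\bigl||a| - |b|\bigr| \le |a-b|$ only recovers the case $t = 1$; the full range $t \in (0,1]$ genuinely requires the concavity of $x \mapsto x^t$, used here in its sub-additive form.
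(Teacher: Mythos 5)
Your proof is correct. The paper instead cites the well-known fact that the snowflake $d_t(x,y)=\abs{x-y}^t$ of a metric is again a metric and reads off the reverse triangle inequality with $x=0$; you give a self-contained proof by establishing the subadditivity $(u+v)^t\leq u^t+v^t$ via the normalization $x+y=1$ and the elementary bound $x^t\geq x$ for $x\in[0,1]$. These are really the same argument — the snowflake fact is proved precisely by your subadditivity estimate — but your version unfolds the black box and does not require the reader to recall the snowflake lemma, which is a small gain in self-containment at the cost of a few extra lines. Your closing remark is also apt: the plain reverse triangle inequality on $\R$ only handles $t=1$, and the concavity/subadditivity of $x\mapsto x^t$ is exactly what carries the argument to all $t\in(0,1]$.
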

\begin{proof}
    Let $d(x,y)=\abs{x-y}$. Then, it is well known that the snowflake metric $d_{t}(x,y)=d(x,y)^t$ satisfies all of the properties of a metric. In particular, we have the reverse triangle inequality:
    \begin{equation}
        \abs{d_t(x,a)-d_{t}(x,b)}\leq d_{t}(a,b).
    \end{equation}
    Choosing $x=0$ gives the desired result.
\end{proof}

\section{Proof of Main Theorems}
From Lemma \ref{lem:compactness_of_distance_functions}, we know that for any sequence of Riemannian metrics $g_i$, a subsequence of $d^D_{p,g_i,g_0}$ converges uniformly to a non-negative, symmetric function which satisfies the triangle inequality. We would like to show that this limiting function has to be the $d_{p,g_0,g_0}$ distance, which we will accomplish by showing pointwise convergence. Pointwise convergence will be shown in two parts where we start by showing a bound from below on pointwise convergence of the sequence.

\begin{thm}\label{LimInfEst}
Let $M^n$ be a compact, connected, oriented manifold, $g$ a smooth Riemannian metric, and $g_i$ a sequence of continuous Riemannian metrics. If $p > n$, $D>0$ and
\begin{align}
\int_M|g_j^{-1}-g_0^{-1}|_{g_0}^{\frac{n(p-1)}{2}}dV_{g_0} &\rightarrow 0\label{eq:inverse_metric_convergence},
\\ \int_M|g_j|_{g_0}^{\frac{n}{2}}dV_{g_0} & \le C\label{eq:int_metric_bound},
\end{align}
then for $x_1,x_2 \in M$ we find
\begin{align}
\liminf_{i \rightarrow \infty} d_{p,g_j,g_0}^D(x_1,x_2) \ge  d_{p,g_0,g_0}^D(x_1,x_2).
\end{align}
\end{thm}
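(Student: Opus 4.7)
The plan is to realize $d_{p,g_0,g_0}^D(x_1,x_2)$ from below by feeding a near-optimal test function for $d_{p,g_0,g_0}^D$ into the $d_{p,g_j,g_0}^D$ problem after a mild rescaling. This reduces the theorem to showing that $\int|\nabla^{g_j}f|_{g_j}^p\,dV_{g_j}$ is asymptotically no bigger than $\int|\nabla^{g_0}f|_{g_0}^p\,dV_{g_0}$.

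First, fix $\eta>0$ and pick an admissible $f$ for $d_{p,g_0,g_0}^D(x_1,x_2)$ with $|f(x_1)-f(x_2)|\ge d_{p,g_0,g_0}^D(x_1,x_2)-\eta$. Mollification in a fixed atlas produces a smooth approximation $\tilde f$ which inherits the bound $\mathcal H_{p,g_0}(\tilde f)\le D$ (averaging never increases the H\"older constant), satisfies $\int|\nabla^{g_0}\tilde f|_{g_0}^p\,dV_{g_0}\le 1+\eta$, and has $|\tilde f(x_1)-\tilde f(x_2)|\ge d_{p,g_0,g_0}^D(x_1,x_2)-2\eta$. The crucial new feature is that $M:=\|\nabla^{g_0}\tilde f\|_{L^\infty}<\infty$.

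The heart of the proof is the estimate $\limsup_j\int_M|\nabla^{g_j}\tilde f|_{g_j}^p\,dV_{g_j}\le 1+\eta$. Write this integral as $\int_M\bigl(g_j^{-1}(d\tilde f,d\tilde f)\bigr)^{p/2}\rho_j\,dV_{g_0}$ with $\rho_j:=\sqrt{\det(g_j)_{g_0}}$. Fix $\epsilon>0$ and split $M=G_j^\epsilon\cup B_j^\epsilon$ where $G_j^\epsilon:=\{|g_j^{-1}-g_0^{-1}|_{g_0}\le\epsilon\}$. On $G_j^\epsilon$ the eigenvalues $\lambda_i$ of $g_j$ with respect to $g_0$ satisfy $|\lambda_i^{-2}-1|\le\epsilon$, so by \lemref{lem:MetricCauchy} one has the pointwise bounds $g_j^{-1}(d\tilde f,d\tilde f)\le(1+\epsilon)|\nabla^{g_0}\tilde f|_{g_0}^2$ and $\rho_j\le(1-\epsilon)^{-n/2}$, which yields
\[
\int_{G_j^\epsilon}|\nabla^{g_j}\tilde f|_{g_j}^p\,dV_{g_j}\le(1+\epsilon)^{p/2}(1-\epsilon)^{-n/2}(1+\eta).
\]
On $B_j^\epsilon$, Chebyshev together with \eqref{eq:inverse_metric_convergence} forces $|B_j^\epsilon|_{g_0}\to 0$. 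Using the pointwise eigenvalue estimate $|g_j^{-1}|_{g_0}^{p/2}\rho_j\le n^{p/4}\lambda_{\min}^{1-p}\prod_{i\ge 2}\lambda_i$, derived from $|g_j^{-1}|_{g_0}\le\sqrt n\,\lambda_{\min}^{-2}$ and $\rho_j=\prod_i\lambda_i$, together with H\"older with exponents $n$ and $n/(n-1)$, the identification $\lambda_{\min}^{-n(p-1)}\le|g_j^{-1}|_{g_0}^{n(p-1)/2}$, and the AM-GM bound $(\prod_{i\ge 2}\lambda_i)^{n/(n-1)}\le C_n|g_j|_{g_0}^{n/2}$, I obtain
\[
\int_{B_j^\epsilon}|g_j^{-1}|_{g_0}^{p/2}\rho_j\,dV_{g_0}\le C_n\biggl(\int_{B_j^\epsilon}|g_j^{-1}|_{g_0}^{n(p-1)/2}\,dV_{g_0}\biggr)^{1/n}\biggl(\int_M|g_j|_{g_0}^{n/2}\,dV_{g_0}\biggr)^{(n-1)/n}.
\]
The second factor is bounded by \eqref{eq:int_metric_bound}, and the first goes to zero using $|g_j^{-1}|^{n(p-1)/2}\le C(|g_j^{-1}-g_0^{-1}|^{n(p-1)/2}+|g_0^{-1}|^{n(p-1)/2})$ combined with \eqref{eq:inverse_metric_convergence} and $|B_j^\epsilon|\to 0$. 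Multiplying by $M^p$ bounds $\int_{B_j^\epsilon}|\nabla^{g_j}\tilde f|_{g_j}^p\,dV_{g_j}$ by a sequence tending to zero, and letting $\epsilon\to 0$ after taking the limsup completes this step.

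Finally, for all large $j$ set $f_j:=(1+2\eta)^{-1/p}\tilde f$; then $\int|\nabla^{g_j}f_j|_{g_j}^p\,dV_{g_j}\le 1$ by the previous paragraph and $\mathcal H_{p,g_0}(f_j)\le(1+2\eta)^{-1/p}D\le D$, so $f_j$ is admissible for $d_{p,g_j,g_0}^D$. Hence $d_{p,g_j,g_0}^D(x_1,x_2)\ge(1+2\eta)^{-1/p}\bigl(d_{p,g_0,g_0}^D(x_1,x_2)-2\eta\bigr)$, and letting $j\to\infty$ and then $\eta\to 0$ yields the theorem. The main obstacle is the bad-set estimate: with only $|g_j|_{g_0}\in L^{n/2}$ and $|g_j^{-1}-g_0^{-1}|_{g_0}\in L^{n(p-1)/2}$, naive H\"older on $\int|g_j^{-1}|^{p/2}\rho_j\,dV_{g_0}$ does not close, so one must use the precise pointwise eigenvalue bound above, which pairs the exponent $n(p-1)/2$ with the exponent $n/2$ through the conjugate pair $(n,\,n/(n-1))$ in exactly the way the hypotheses allow.
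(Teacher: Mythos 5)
Your proof is correct in its main structure and takes a genuinely different route from the paper's. The paper works directly at the level of the quantities $P_j^{1/p}(\lambda_i^j)^{-1}$, proving that they converge to $1$ in $L^p(g_0)$ by a telescoping decomposition of $\prod_{m\neq i}\lambda_m^j - 1$ together with Minkowski, generalized H\"older, and the snowflake inequality; this yields convergence of the full $L^p$-gradient norm for every fixed smooth $\phi$ and, as a byproduct, $L^k$ convergence of $g_j\to g_0$ for $k<n$ (used elsewhere in the paper). You instead fix a smooth, hence $W^{1,\infty}$, near-optimizer $\tilde f$ and split $M$ into the good set $G_j^\epsilon$, where the pointwise eigenvalue control $|\lambda_i^{-2}-1|\leq\epsilon$ gives the sharp $(1+\epsilon)^{p/2}(1-\epsilon)^{-n/2}$ factor, and the bad set $B_j^\epsilon$, where Chebyshev kills the measure and the pointwise bound $|g_j^{-1}|_{g_0}^{p/2}\rho_j\leq n^{p/4}\lambda_{\min}^{1-p}\prod_{i\geq 2}\lambda_i$ feeds precisely into a single H\"older application with exponents $n$ and $n/(n-1)$, pairing \eqref{eq:inverse_metric_convergence} against \eqref{eq:int_metric_bound}. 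Both approaches exploit exactly the same numerology, but your two-region argument avoids the cascading product estimates at the cost of needing the $L^\infty$ gradient bound, which is exactly what the mollification step buys you.

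One place you should tighten up: the parenthetical ``averaging never increases the H\"older constant'' is true for Euclidean convolution but is not automatic when mollifying in charts against the $g_0$-distance, since the chart map does not intertwine Euclidean translation with $d_{g_0}$. What is true, and sufficient, is that the H\"older constant increases by at most a factor $1+o(1)$ as the mollification scale tends to zero, so you should take $\mathcal{H}_{p,g_0}(\tilde f)\leq(1+\eta)D$ and choose the final downscaling constant so that both the gradient integral bound and the H\"older bound are restored; since $(1+\eta)$ can be made arbitrarily close to $1$ independently of everything else, this costs nothing. The paper sidesteps this by quoting density of $C^\infty$ in $W^{1,p}$ without dwelling on the H\"older side condition, so neither write-up is fully explicit here, but it is worth flagging in your version because the $L^\infty$ bound on $\nabla^{g_0}\tilde f$ is load-bearing for you in a way it is not for the paper.
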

\begin{rmrk}
    One should note that by reworking the argument one can replace the $\frac{n}{2}$ in \eqref{eq:int_metric_bound} with a $p$ close to $\frac{n}{2}$ at the expense of a more complicated power in \eqref{eq:inverse_metric_convergence}.
\end{rmrk}
\begin{proof}
If $\phi \in C^{\infty}(M,g_0)$ at each point $x \in M$ we can choose  $\left\{e_{i}^{j}\right\}_{i=1}^{n}$ an orthonormal basis for $g_0$ which diagonalizes $g_{j}$, i.e. $g_j(e_{i}^{j},e_{i}^{j})=(\lambda_{i}^{j})^2$ and $g_j(e_{i}^{j},e^j_k)=0$ for $i\not = k$, and $P_j=\sqrt{\det((g_j)_{lm})}$. Then, we have that
\begin{equation}
    \abs{\nabla^{g_{j}}\phi}_{g_j}=\sqrt{\sum_{i=1}^{n}\left(\lambda^{i}_{j}\right)^{-2}d\phi(e_{i}^{j})^2}=\abs{\sum_{i=1}^{n}\left(\lambda^{i}_{j}\right)^{-1}d\phi(e_{i}^{j})e_{i}^{j}}_{g_0}
\end{equation}
Our goal is to show that an admissible function for $g_0$ will be almost admissible for the sequence $g_j$. To this end, we see by the reverse triangle inequality
\begin{align}
&\abs{\lp\int_M\abs{\nabla^{g_{j}}\phi}_{g_j}^{p}dV_{g_i}\rp^{\frac1p}-\lp\int_M\abs{\nabla^{g_0}\phi}_{g_0}^pdV_{g_0}\rp^{\frac{1}{p}}}
    \\&=\abs{\lp\int_M\abs{P^{\frac{1}{p}}_{j}\nabla^{g_{i}}\phi}_{g_j}^{p}dV_{g_0}\rp^{\frac1p}-\lp\int_M\abs{\nabla^{g_0}\phi}_{g_0}^pdV_{g_0}\rp^{\frac{1}{p}}}
    \\&\leq\lp\int_M\abs{\sum_{i=1}^{n}\lp\lp\lambda_{i}^{j}\rp^{-1}P^{\frac1p}_{j}-1\rp d\phi(e_{i}^{j})e_{i}^{j}}^{p}_{g_{0}}dV_{g_0}\rp^{\frac{1}{p}}.\label{eq:Final1}
\end{align}
So, it follows from the Minkowski inequality  
\begin{align}
  &\lp\int_M\abs{\sum_{i=1}^{n}\lp\lp\lambda_{i}^{j}\rp^{-1}P^{\frac1p}_{j}-1\rp d\phi(e_{i}^{j})e_j^i}^{p}_{g_{0}}dV_{g_0}\rp^{\frac{1}{p}} 
  \\&\le \sum_{i=1}^{n}\lp\int_M\abs{\lp\lp\lambda_{i}^{j}\rp^{-1}P^{\frac1p}_{j}-1\rp }^{p}_{g_{0}}|d\phi(e_{i}^{j})|^pdV_{g_0}\rp^{\frac{1}{p}}
  \\&\le\|d\phi(e_{i}^{j}) \|_{L^{\infty}(g_0)}\sum_{i=1}^{n}\lp\int_M\abs{\lp\lambda_{i}^{j}\rp^{-1}P^{\frac1p}_{j}-1 }^{p}_{g_{0}}dV_{g_0}\rp^{\frac{1}{p}}.\label{eq:Final2}
\end{align}
and hence if we show
\begin{equation}
    P^{\frac{1}{p}}_{j}\lp\lambda_{i}^{j}\rp^{-1}\stackrel{L^{p}}{\rightarrow}1
\end{equation}
for each $j$, then the above converges to zero. We first notice that
\begin{equation}\label{eq:Final3}
    P^{\frac{1}{p}}_{j}\lp\lambda_{i}^{j}\rp^{-1}=\left( \prod_{m\neq i}\lambda_{m}^{j}\right)^{\frac{1}{p}}\lp\lambda_{i}^{j}\rp^{\frac{1}{p}-1}
\end{equation}
and now we want to argue that if $g_j^{-1}$ converges in some $L^p$ and $g_j$ is bounded in $L^q$ then $g_j$ converges in $L^k$ as well where we will obtain precise choices for $p,q,k$. 

To this end, we have
\begin{equation}
    \left|1-\lambda_{i}^{j}\right|^{k}=\left(\lambda_{i}^{j}\right)^{k}\left|\left(\lambda_{i}^{j}\right)^{-1}-1\right|^{k}.
\end{equation}
 As long as $k<n$ we may apply H{\"o}lder's inequality with $\frac{n}{k}$ and $\frac{n}{n-k}$ to find
\begin{equation}
    \int_M\abs{1-\lambda_{i}^{j}}^kdV_{g_0}\leq\sup_{ij}\|\lambda_{i}^{j}\|^{k}_{L^{n}}\lp\int_M\abs{(\lambda_{i}^{j})^{-1}-1}^{\frac{kn}{n-k}}dV_{g_0}\rp^{\frac{n-k}{n}}
\end{equation}
Therefore, if $\sup_{ij}\|\lambda_{i}^{j}\|_{L^{n}}<\infty$ and $(\lambda_{i}^{j})^{-1}$ converges to $1$ in $L^{\frac{kn}{n-k}}$, then the above shows that $\lambda_{i}^{j}$ converges to $1$ in $L^k$. 

Now we observe that we can rewrite
\begin{align}
    \lambda_2^j...\lambda_n^j-1&=\lambda_2^j...\lambda_n^j-\lambda_3^j...\lambda_n^j+\lambda_3^j...\lambda_n^j-...-\lambda_n^j+\lambda_n^j-1
    \\&=(\lambda_2^j-1)\lambda_3^j...\lambda_n^j+...+(\lambda_{n-1}^j-1)\lambda_n^j+(\lambda_n^j-1)
\end{align}

If we pick $n>k\ge n-1$, then by the Minkowski inequality and then the generalized H\"{o}lder inequality we find 
\begin{align}
    \| \lambda_2^j...\lambda_n^j-1\|_{L^{\frac{k}{n-1}}}& \le \|(\lambda_2^j-1)\lambda_3^j...\lambda_n^j\|_{L^{\frac{k}{n-1}}}+...+\|\lambda_n^j-1\|_{L^{\frac{k}{n-1}}}
    \\&\le \|(\lambda_2^j-1)\|_{L^{k}}\|\lambda_3^i\|_{L^{k}}...\|\lambda_n^j\|_{L^{k}}
    \\&\quad +\|(\lambda_3^j-1)\|_{L^{\frac{k(n-2)}{n-1}}}\|\lambda_4^j\|_{L^{{\frac{k(n-2)}{n-1}}}}...\|\lambda_n^j\|_{L^{{\frac{k(n-2)}{n-1}}}}
    \\&\quad +...+\|\lambda_n^j-1\|_{L^{\frac{k}{n-1}}}.
\end{align}

The same argument above applies if we replace $\lambda_1^j$ with $\lambda_{i}^{j}$ for $1 \le i \le n$ and hence we see that 
\begin{equation}\label{IntermediateConvergenceEq}
    \prod_{m\neq j}\lambda^{i}_{m}\stackrel{L^{\frac{k}{n-1}}}{\rightarrow}1.
\end{equation}
Now, for each $i$ we have
\begin{equation}
    \left\|P^{\frac{1}{p}}_{j}\left(\lambda_{i}^{j}\right)^{-1}-1\right\|_{L^{p}}=\left\|\left(\lambda_{i}^{j}\right)^{\frac{1-p}{p}}\left(\prod_{m\neq i}\lambda^j_{m}\right)^{\frac1p}-1\right\|_{L^{p}}.
\end{equation}
Adding and subtracting by $\left(\lambda_{i}^{j}\right)^{\frac{1-p}{p}}$, and then using the triangle inequality shows us that
\begin{align}
    \left\|P^{\frac{1}{p}}_{j}\left(\lambda_{i}^{j}\right)^{-1}-1\right\|_{L^{p}}\leq&\left\|\left(\lambda_{i}^{j}\right)^{\frac{1-p}{p}}\left(\left(\prod_{m\neq i}\lambda^j_{m}\right)^{\frac{1}{p}}-1\right)\right\|_{L^{p}}
    \\&+\left\|\left(\lambda_{i}^{j}\right)^{\frac{1-p}{p}}-1\right\|_{L^{p}}.
\end{align}
We now analyze the first term on the right:
\begin{align}
    &\left\|\left(\lambda_{i}^{j}\right)^{\frac{1-p}{p}}\left(\left(\prod_{m\neq i}\lambda^j_{m}\right)^{\frac1p}-1\right)\right\|_{L^{p}}
    \\\qquad&=\left(\int_M\abs{\left(\lambda_{i}^{j}\right)^{-1}}^{p-1}\abs{\left(\prod_{m\neq i}\lambda^j_{m}\right)^{\frac1p}-1}^{p}dV_{g_0}\right)^{\frac1p}.\label{eq:RHS}
\end{align}
By applying Proposition \ref{prop:reverse_triangle_ineq_snowflake} to \eqref{eq:RHS} gives the following bound on \eqref{eq:RHS}
\begin{equation}
    \left(\int_M\abs{\left(\lambda_{i}^{j}\right)^{-1}}^{p-1}\abs{\prod_{m\neq i}\lambda^j_{m}-1}dV_{g_0}\right)^{\frac1p}.\label{eq:PreviousTerm}
\end{equation}
If $n-1<k<n$, then we may use H{\"o}lder's inequality with exponents $\frac{k}{n-1}$ and $\frac{k}{k+1-n}$ to show that \eqref{eq:PreviousTerm} is bounded above by
\begin{equation}
    \left(\int_M\abs{\left(\lambda_{i}^{j}\right)^{-1}}^{\frac{k(p-1)}{k+1-n}}dV_{g_0}\right)^{\frac{k+1-n}{pk}}\left(\int_M\abs{\prod_{m\neq i}\lambda^j_{m}-1}^{\frac{k}{n-1}}dV_{g_0}\right)^{\frac{n-1}{pk}}\label{eq:Product}
\end{equation}
The first term in \eqref{eq:Product} will be no worse than if we chose $k=n$. Therefore, the term in Line \eqref{eq:Product} is in fact bounded up to some constant by
\begin{equation}
    \left(\int_M\abs{\left(\lambda_{i}^{j}\right)^{-1}}^{n(p-1)}dV_{g_0}\right)^{\frac{1}{pn}}\left(\int_M\abs{\prod_{m\neq i}\lambda^j_{m}-1}^{\frac{k}{n-1}}dV_{g_0}\right)^{\frac{n-1}{pk}}\label{eq:Product2}
\end{equation}
The first term in \eqref{eq:Product2} is uniformly bounded in $j$ by Equation \eqref{eq:inverse_metric_convergence} and the second term tends to zero by \eqref{IntermediateConvergenceEq}.

We now analyze $\left\|\left(\lambda_{i}^{j}\right)^{-\frac{p-1}{p}}-1\right\|_{L^{p}}$ by first showing a relationship between inverse eigenvalues and the norm of the respective inverse metric by using H\"{o}lder's inequality for the counting measure and Proposition \ref{prop:reverse_triangle_ineq_snowflake}
\begin{align}
    \abs{ \frac{1}{\lambda_{i}^{j}}-1 } &\le  \sum_{i=1}^n\abs{ \frac{1}{\lambda_{i}^{j}} -1}
    \\&\le n^{\frac{4}{3}} \lp\sum_{i=1}^n\abs{ \frac{1}{\lambda_{i}^{j}}-1 } ^4 \rp^{\frac{1}{4}}
    \\&\le n^{\frac{4}{3}} \lp\sum_{i=1}^n\abs{\frac{1}{(\lambda_{i}^{j})^4}-1 }  \rp^{\frac{1}{4}} = n^{\frac{4}{3}}|g_j^{-1}-g_0^{-1}|_{g_0}^{\frac{1}{2}}. \label{eq:InverseNormToEigenvalues}
\end{align}
Now we can again use Proposition \ref{prop:reverse_triangle_ineq_snowflake} and \eqref{eq:InverseNormToEigenvalues} to find
\begin{align}
    \left\|\left(\lambda_{i}^{j}\right)^{-\frac{p-1}{p}}-1\right\|_{L^{p}}&= \left(\int_M \abs{\left(\lambda_{i}^{j}\right)^{-\frac{p-1}{p}}-1}^p dV_{g_0} \right)^{\frac{1}{p}}
    \\&\le \left(\int_M \abs{\left(\lambda_{i}^{j}\right)^{-1}-1}^{p-1} dV_{g_0} \right)^{\frac{1}{p}}
   \\ &\le n^{\frac{4(p-1)}{3p}} \left(\int_M |g_j^{-1}-g_0^{-1}|_{g_0}^{\frac{p-1}{2}} dV_{g_0} \right)^{\frac{1}{p}}.
\end{align}


Then by assumption \eqref{eq:inverse_metric_convergence} we see that $ \left\|P^{\frac{1}{p}}_{j}\left(\lambda_{i}^{j}\right)^{-1}-1\right\|_{L^{p}}\rightarrow 0$ and so we find
\begin{equation}
    \lp\lambda_{i}^{j}\rp^{\frac{1}{p}-1}\lp\prod_{m\neq i}\lambda^{j}_{m}\rp^{\frac{1}{p}}\stackrel{L^{p}}{\rightarrow}1,
\end{equation}
for $1 \le i \le n$. Hence by \eqref{eq:Final1}, \eqref{eq:Final2}, and \eqref{eq:Final3} we find
\begin{align}\label{WeakConvergenceConsequence}
    \||\nabla^{g_j} \phi|_{g_j}\|_{L^p(g_j)} \rightarrow  \||\nabla^{g_0} \phi|_{g_0}\|_{L^p(g_0)},
\end{align}

By Morrey's inequality and the density of $C^{\infty}(M,g_0)$ in $W^{1,p}(M,g_0)$ we can choose $\varphi \in C^{\infty}(M,g_0)$ so that
\begin{align}
    d_{p,g_0,g_0}^D(x_1,x_2)\le |\varphi(x_1)-\varphi(x_2)|+\varepsilon.
\end{align}
and
\begin{equation}
    \mathcal{H}_{p,g_0}(\phi)\leq D.
\end{equation}
Using equation \eqref{WeakConvergenceConsequence} we can pick $C_j$ so that $\varphi_j=C_j \varphi$ is a competitor for $g_j$ and $C_j \rightarrow 1$ as $j \rightarrow \infty$. Then, we see that
\begin{align}
    d_{p,g_j,g_0}^D(x_1,x_2) &\ge |\varphi_j(x_1)-\varphi_j(x_2)|
    \\&=C_j|\varphi(x_1)-\varphi(x_2)|=C_j(d_{p,g_0,g_0}^D(x_1,x_2)-\varepsilon).
\end{align}
Since this is true for all $\varepsilon>0$ we find 
\begin{align}\label{eq:FinalEq}
   C_j^{-1} d_{p,g_j,g_0}^D(x_1,x_2) &\ge d_{p,g_0,g_0}^D(x_1,x_2).
\end{align}
Now by taking the liminf on both sides of \eqref{eq:FinalEq} we find the desired result.
\end{proof}

 The second part of showing that pointwise convergence holds is to demonstrate an upper bound. This is more difficult, because now we must work with a sequence of test functions for the metrics $g_{i}$ instead of a single test function for the metric $g_0$. Analyzing such sequences requires using the Arzela-Ascoli theorem in some form.

\begin{thm}\label{thm:pointwiseBelow}
Let $M^n$ be a compact, connected, oriented manifold, $g$ a smooth Riemannian metric, and $g_j$ a sequence of continuous Riemannian metrics. If $n \ge 3$, $p > \frac{5n}{2}$, $\eta=\frac{5}{12}n$, 
\begin{align}
    \left( \int_M|g_j^{-1}|_{g_0}^{\frac{n\eta}{p-\eta}}dV_{g_0} \right)^{\frac{p-\eta}{2p}} &\le C,\label{eq:L^pMetricBound}
     \\\left( \int_M|g_j|_{g_0}^{\frac{n}{2}} dV_{g_0} \right)&\le C,\label{eq:L^nMetricBound}
\end{align}
and
\begin{align}\label{Eq:InverseAssumptionProp3.3}
\left(\int_M|g_j^{-1}-g_0^{-1}|_{g_0}^{\frac{p}{(p-1)}}dV_{g_0} \right)^{\frac{p-1}{p}} &\rightarrow 0, 
\end{align}
then for $x_1,x_2 \in M$ we find
\begin{align}
\limsup_{i \rightarrow \infty} d_{p,g_j,g_0}^D(x_1,x_2) \le  d_{p,g_0,g_0}^D(x_1,x_2).
\end{align}
\end{thm}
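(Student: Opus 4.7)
The plan is to choose near-optimal test functions $f_j$ for $d^{D}_{p,g_j,g_0}(x_1,x_2)$, use the Hölder constraint $\mathcal{H}_{p,g_0}(f_j)\le D$ to extract a uniform subsequential limit $f_\infty$, and then show $f_\infty$ is an admissible competitor for $d^{D}_{p,g_0,g_0}(x_1,x_2)$. Concretely, pick $f_j$ with $|f_j(x_1)-f_j(x_2)| \ge d^{D}_{p,g_j,g_0}(x_1,x_2) - 1/j$, normalized so that $f_j(x_1)=0$. The Hölder bound forces $\{f_j\}$ to be equicontinuous and uniformly bounded on $(M,g_0)$, so Arzelà--Ascoli yields a subsequence, still denoted $f_j$, converging uniformly to some $f_\infty\in C^0(M)$. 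The bound $\mathcal{H}_{p,g_0}(f_\infty)\le D$ passes automatically to the limit.

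The main task is to show $f_\infty\in W^{1,p}(M,g_0)$ with $\int_M|\nabla^{g_0}f_\infty|_{g_0}^p\,dV_{g_0} \le 1$, so that $f_\infty$ qualifies as a test function for $d^{D}_{p,g_0,g_0}$. The starting point is the pointwise identity
\begin{equation*}
|\nabla^{g_0}f|_{g_0}^2 = |\nabla^{g_j}f|_{g_j}^2 + (g_0^{-1}-g_j^{-1})(df,df),
\end{equation*}
combined with Lemma \ref{lem:MetricCauchy} to bound the error by $|g_0^{-1}-g_j^{-1}|_{g_0}|\nabla^{g_0}f|_{g_0}^2$. From this one extracts the crude comparison $|\nabla^{g_0}f_j|_{g_0}\le |g_j|_{g_0}^{1/2}|\nabla^{g_j}f_j|_{g_j}$, which, together with $|\dot\gamma|_{g_j}\le|g_j|_{g_0}^{1/2}|\dot\gamma|_{g_0}$ along any curve, shows that $H_j := |g_j|_{g_0}^{1/2}|\nabla^{g_j}f_j|_{g_j}$ is a $g_0$-upper gradient of $f_j$ in the sense of Proposition \ref{prop: Technical Gradient Check}.

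I would then establish that $\{H_j\}$ is uniformly bounded in $L^p(M,g_0)$. Rewriting
\begin{equation*}
\int_M H_j^p\,dV_{g_0} = \int_M |g_j|_{g_0}^{p/2}\,\rho_j^{-1}\,|\nabla^{g_j}f_j|_{g_j}^p\,dV_{g_j},
\end{equation*}
with $\rho_j = dV_{g_j}/dV_{g_0}$, and applying Hölder's inequality together with the bound $\rho_j^{-1}\le C|g_j^{-1}|_{g_0}^{n/2}$ from the lemmas of Section 2 and the $L^1(g_j)$ control on $|\nabla^{g_j}f_j|_{g_j}^p$, the hypotheses \eqref{eq:L^pMetricBound} and \eqref{eq:L^nMetricBound} should be balanced carefully; the specific exponent $\eta=5n/12$ is chosen precisely so that the resulting Hölder computation closes. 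A weak subsequential limit $H_j\rightharpoonup H_\infty$ then yields $\|H_\infty\|_{L^p(g_0)}\le \liminf \|H_j\|_{L^p(g_0)}$, and a standard upper-gradient compactness argument identifies $H_\infty$ as a $g_0$-upper gradient of $f_\infty$, so Proposition \ref{prop: Technical Gradient Check} gives $|\nabla^{g_0}f_\infty|_{g_0}\le H_\infty$ almost everywhere.

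The sharpest step, and the one I expect to be the main obstacle, is refining this to obtain the precise constant $\int|\nabla^{g_0}f_\infty|_{g_0}^p\, dV_{g_0}\le 1$. This requires writing $\int|\nabla^{g_0}f_j|_{g_0}^p\,dV_{g_0}$ as $\int|\nabla^{g_j}f_j|_{g_j}^p\,dV_{g_j}$ plus a remainder that incorporates both the tensor error $(g_0^{-1}-g_j^{-1})(df_j,df_j)$ and the volume density discrepancy $\rho_j-1$, and then showing the remainder is $o(1)$ using assumption \eqref{Eq:InverseAssumptionProp3.3}, the norm bounds on $g_j$ and $g_j^{-1}$, Proposition \ref{prop:reverse_triangle_ineq_snowflake}, and a sequence of careful Hölder applications paralleling the computation in Theorem \ref{LimInfEst}. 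Once this sharp bound is established, $f_\infty$ qualifies as a test function for $d^{D}_{p,g_0,g_0}$, so $|f_\infty(x_1)-f_\infty(x_2)|\le d^{D}_{p,g_0,g_0}(x_1,x_2)$; combined with the uniform convergence and the near-optimality of $f_j$, this gives $\limsup_{j\to\infty}d^{D}_{p,g_j,g_0}(x_1,x_2)\le d^{D}_{p,g_0,g_0}(x_1,x_2)$.
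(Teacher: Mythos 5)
Your overall strategy — take near-optimal test functions $f_j$, use the H\"older constraint and Arzel\`a–Ascoli to get a uniform limit $f_\infty$, then show $f_\infty$ is admissible for $d^D_{p,g_0,g_0}$ — is exactly the paper's. But there is a genuine gap in the step where you claim a uniform $L^p(M,g_0)$ bound on $H_j = |g_j|_{g_0}^{1/2}|\nabla^{g_j}f_j|_{g_j}$. Write out what this would need: $\int_M H_j^p\,dV_{g_0} = \int_M |g_j|_{g_0}^{p/2}|\nabla^{g_j}f_j|_{g_j}^p\,dV_{g_0}$, whereas what you actually control is $\int_M |\nabla^{g_j}f_j|_{g_j}^p\,P_j\,dV_{g_0}\le 1$ with $P_j = \sqrt{\det g_j} \lesssim |g_j|_{g_0}^{n/2}$. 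The ratio of the weights is roughly $|g_j|_{g_0}^{(p-n)/2}$, and under hypothesis \eqref{eq:L^nMetricBound} you only have $|g_j|_{g_0}\in L^{n/2}$, which is the \emph{critical} integrability allowing splines. No H\"older split can close this to give an $L^p(g_0)$ bound when $p>n$ (indeed $p>5n/2$ here); the best one can extract is an $L^\eta(g_0)$ bound on $|\nabla^{g_0}f_j|_{g_0}$ for some $\eta<n$, which is precisely why the exponent $\eta=5n/12$ appears.

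This forces a different architecture, which is the real content the paper supplies and your proposal does not: one must work with \emph{two} weak limits at different integrability scales. First, $|\nabla^{g_0}f_j|_{g_0}$ is bounded in $L^\eta(g_0)$ for $\eta=5n/12<n$, giving a weak $L^\eta$ limit $\Phi_\infty$ with $|\nabla^{g_0}f_\infty|_{g_0}\le\Phi_\infty$ (via Proposition~\ref{prop: Technical Gradient Check}). Second, the $g_j$-weighted quantity $|\nabla^{g_j}f_j|_{g_j}P_j^{1/p}$ is bounded in $L^p(g_0)$ with norm $\le 1$ \emph{exactly} (this is just a rewrite of the admissibility constraint), giving a weak $L^p$ limit $\psi_\infty$ with $\|\psi_\infty\|_{L^p}\le 1$. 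The crux of the proof — which you relegate to ``a standard upper-gradient compactness argument'' and a vague ``remainder is $o(1)$'' claim — is then to show $\Phi_\infty=\psi_\infty$ almost everywhere, by testing against smooth $\phi$ and splitting into four error terms controlled respectively by weak convergence, the convergence $g_j^{-1}\to g_0^{-1}$ from \eqref{Eq:InverseAssumptionProp3.3}, the admissibility bound, and the determinant estimate. Without this two-limit structure the sharp constant $1$ in $\int_M|\nabla^{g_0}f_\infty|_{g_0}^p\,dV_{g_0}\le 1$ is not obtainable, and the argument does not close.
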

\begin{rmrk}
    We note that with the choice of $\eta=\frac{5}{12}n$ we find
    \begin{align}
        \frac{n \eta}{p-\eta}\le \frac{5}{7}n\le \frac{3}{2}n \le \frac{n}{2}n \le \frac{n(p-1)}{2},
    \end{align}
    and hence \eqref{LpMetricBounds} in Theorem \ref{thm:Main Theorem} implies the hypothesis of Theorem \ref{thm:pointwiseBelow}. We also note that for $n,p \ge 3$ we have
    \begin{align}
        \frac{p}{p-1}  \le 2 \le \frac{n(p-1)}{2},
    \end{align}
    which shows that \eqref{LpMetricBounds} of Theorem \ref{thm:Main Theorem} also implies \eqref{Eq:InverseAssumptionProp3.3} of Theorem \ref{thm:pointwiseBelow}.
\end{rmrk}

\begin{proof}
Let $\varphi_j \in W^{1,p}(M,g_j)$ so that
\begin{align}
d_{p,g_j,g_0}^D(x_1,x_2) - \varepsilon &\le |\varphi_j(x_1)-\varphi_j(x_2)|
\\ \int_M |\nabla^{g_j} \varphi_j |_{g_j}^p dV_{g_j}  &\le 1,\label{AdmissableIntegralConsequence}
\end{align}
and by definition
\begin{align}
|\varphi_j(x_1)-\varphi_j(x_2)| \le D d_{g_0}(x_1,x_2)^{\frac{p-n}{p}}.
\end{align}
Let $P_j=\sqrt{\det((g_j)_{lm})}$ where we note the estimate
\begin{align}
 P_j &\le n^{-\frac{n}{4}}|g_j|_{g_0}^{\frac{n}{2}},
\\ P_j^{-1} &\le n^{-\frac{n}{4}}|g_j^{-1}|_{g_0}^{\frac{n}{2}}.
\end{align}

Now for $1< \eta <p$ by applying Lemma \ref{lem:MetricCauchy} to $|\nabla ^{g_0} \varphi_j|^2_{g_0} = (g_0)^{-1}(d\varphi_j,d\varphi_j)$, the fact that by definition of the norms $|g_0^{-1}|_{g_j}=|g_j|_{g_0}$, and then H\"{o}lder's inequality we find
\begin{align}
    &\int_M|\nabla ^{g_0} \varphi_j|_{g_0}^{\eta} dV_{g_0} 
     \\&\qquad\le \int_M |\nabla ^{g_j}\varphi|_{g_j}^{\eta} |g_0^{-1}|_{g_j}^{\frac{\eta}{2}} dV_{g_0}
    \\&\qquad\le \int_M |\nabla ^{g_j}\varphi|_{g_j}^{\eta} |g_j|_{g_0}^{\frac{\eta}{2}} dV_{g_0}
    \\&\qquad= \int_M |\nabla ^{g_j}\varphi_j|_{g_j}^{\eta}P_j^{\frac{\eta}{p}} P_j^{-\frac{\eta}{p}} |g_j|_{g_0}^{\frac{\eta}{2}} dV_{g_0}
     \\&\qquad\le \left(\int_M |\nabla ^{g_j}\varphi_j|_{g_j}^{p}P_j dV_{g_0} \right)^{\frac{\eta} {p}} \left( \int_MP_j^{-\frac{\eta}{p-\eta}} |g_j|_{g_0}^{\frac{p\eta}{2(p-\eta)}} dV_{g_0} \right)^{\frac{p-\eta}{p}}
          \\&\qquad\le \left(\int_M |\nabla ^{g_j}\varphi_j|_{g_j}^{p}dV_{g_j} \right)^{\frac{\eta} {p}} \left( \int_M|g_j^{-1}|_{g_0}^{\frac{n\eta}{2(p-\eta)}} |g_j|_{g_0}^{\frac{p\eta}{2(p-\eta)}} dV_{g_0} \right)^{\frac{p-\eta}{p}}
           \\&\qquad\le  \left( \int_M|g_j^{-1}|_{g_0}^{\frac{n\eta}{p-\eta}}dV_{g_0} \right)^{\frac{p-\eta}{2p}} \left( \int_M|g_j|_{g_0}^{\frac{p\eta}{p-\eta}} dV_{g_0} \right)^{\frac{p-\eta}{2p}}\label{eq:Boundedness}
\end{align}
Notice that since $\eta=\frac{5n}{12}$ we see that
\begin{align}
     \frac{p\eta}{p-\eta} = \frac{p\frac{5n}{12}}{p-\frac{5n}{12}} = \frac{p}{p-\frac{5n}{12}}\frac{10}{12}\frac{n}{2}< \frac{n}{2},
\end{align}
implies that
\begin{align}
    \frac{p}{p-\frac{5n}{12}}<\frac{12}{10} \Rightarrow p> \frac{5n}{2}.
\end{align}
Since we have assumed $p$ is in this range we see that \eqref{eq:Boundedness} is bounded by \eqref{eq:L^pMetricBound} and \eqref{eq:L^nMetricBound}.

This implies that there exists a $\varphi_{\infty} \in W^{1,\eta}(M,g_0)\cap C^0(M,g_0)$ so that
\begin{align}
    \varphi_j &\rightharpoonup \varphi_{\infty}, \text{ in } W^{1,\eta}(M,g_0),
    \\ |\nabla ^{g_0}\varphi_j|_{g_0} &\rightharpoonup \Phi_{\infty}, \text{ in } L^{\eta}(M,g_0)\label{FurtherWeakConvergence},
\end{align}
where
\begin{align}
    |\nabla^{g_0}\varphi_{\infty}|_{g_0} \le \Phi_{\infty} \quad \text{ a.e.},
\end{align}
by Proposition \ref{prop: Technical Gradient Check}.

From \eqref{AdmissableIntegralConsequence} we also find there exists a $\psi_{\infty} \in L^p(M,g_0)$ so that
\begin{align}
    |\nabla^{g_j} \varphi_j|_{g_j} P_j^{\frac{1}{p}} \rightharpoonup  \psi_{\infty}  \text{ in } L^p,\quad  \int \psi_{\infty}^p dV_{g_0} \le 1. \label{AdmissableBound}
\end{align}
Now we notice if $\phi \in C^{\infty}(M,g_0)$ then
\begin{align}
&\left|\int_M(\Phi_{\infty}-\psi_{\infty}) P_j^{\frac{1}{p}} \phi dV_{g_0}\right|\label{eq:MainDeal}
\\&=\left|\int_M(\Phi_{\infty}-|\nabla \varphi_j|_{g_j}+|\nabla \varphi_j|_{g_j}-\psi_{\infty}) P_j^{\frac{1}{p}} \phi dV_{g_0}\right|
\\ &\le \left|\int_M \left(\Phi_{\infty}P_j^{\frac{1}{p}}-|\nabla \varphi_j|_{g_0}P_j^{\frac{1}{p}}+|\nabla \varphi_j|_{g_0}P_j^{\frac{1}{p}}-|\nabla \varphi_j|_{g_j}P_j^{\frac{1}{p}}\right) \phi dV_{g_0}\right|
\\&+ \left|\int_M\left(|\nabla \varphi_j|_{g_j}P_j^{\frac{1}{p}}-\psi_{\infty}+\psi_{\infty}-\psi_{\infty} P_j^{\frac{1}{p}} \right)\phi dV_{g_0}\right| 
\\ &\le\left|\int_M (\Phi_{\infty}-|\nabla \varphi_j|_{g_0})P_j^{\frac{1}{p}} \phi dV_{g_0}\right|
\\&+\left|\int_M\left(|\nabla \varphi_j|_{g_j}-|\nabla \varphi_j|_{g_0}\right)P_j^{\frac{1}{p}} \phi dV_{g_0}\right|
\\&+ \left|\int_M(|\nabla \varphi_j|_{g_j}P_j^{\frac{1}{p}}-\psi_{\infty})\phi dV_{g_0}\right| +\left| \int_M\psi_{\infty}(1- P_j^{\frac{1}{p}} )\phi dV_{g_0}\right|
\\&:=I+II+III+IV.
\end{align}
Notice that $III$ goes to zero by \eqref{AdmissableBound}. Our goal now is to show that $I$, $II$, and $IV$ are converging to zero as well. To this end we calculate
\begin{align}
  II&= \left|\int_M\left(|\nabla \varphi_j|_{g_j}-|\nabla \varphi_j|_{g_0}\right)P_j^{\frac{1}{p}}\phi dV_{g_0} \right| 
  \\&\le\int_M\left|\sqrt{g_j^{-1}(d\varphi_j,d\varphi_j)}-\sqrt{g_0^{-1}(d\varphi_j,d\varphi_j)}\right|P_j^{\frac{1}{p}} \phi dV_{g_0} 
  \\&\le\int_M\sqrt{\left|g_j^{-1}(d\varphi_j,d\varphi_j)-g_0^{-1}(d\varphi_j,d\varphi_j)\right|}P_j^{\frac{1}{p}} dV_{g_0} \label{Eq:AppliedSnowflake}
    \\&=\int_M\sqrt{\left|(g_j^{-1}-g_0^{-1})(d\varphi_j,d\varphi_j)\right|}P_j^{\frac{1}{p}} \phi dV_{g_0}
\end{align}
where we used Proposition \ref{prop:reverse_triangle_ineq_snowflake} in \eqref{Eq:AppliedSnowflake}. By Further estimating we find
\begin{align}
  II&\le\int_M|g_j^{-1}-g_0^{-1}|_{g_j}^{\frac{1}{2}}|\nabla \varphi_j|_{g_j} P_j^{\frac{1}{p}} \phi dV_{g_0}\label{Eq:AppliedMetricCauchy}
      \\&\le\int_M|g_j^{-1}-g_0^{-1}|_{g_0}^{\frac{1}{2}}|g_j|_{g_0}^{\frac{1}{4}}|\nabla \varphi_j|_{g_j} P_j^{\frac{1}{p}} \phi dV_{g_0}\label{Eq:AppliedMetricCauchyAgain}
    \\&\le \left(\int_M|g_j^{-1}-g_0^{-1}|_{g_0}^{\frac{p}{2(p-1)}}|g_j|_{g_0}^{\frac{p}{4(p-1)}}dV_{g_0} \right)^{\frac{p-1}{p}}\left(\int_M|\nabla \varphi_j|_{g_j} ^pdV_{g_j}\right)^{\frac{1}{p}},\label{Eq:AppliedHolders}
    \\&\le \left(\int_M|g_j^{-1}-g_0^{-1}|_{g_0}^{\frac{p}{(p-1)}}dV_{g_0} \right)^{\frac{p-1}{2p}}\label{Eq:AppliedHolders2}
    \\& \quad \cdot\left(\int_M|g_j|_{g_0}^{\frac{p}{2(p-1)}}dV_{g_0} \right)^{\frac{p-1}{2p}}\left(\int_M|\nabla \varphi_j|_{g_j} ^pdV_{g_j}\right)^{\frac{1}{p}},
\end{align}
where we used Lemma \ref{lem:MetricCauchy} in \eqref{Eq:AppliedMetricCauchy}, and H\"{o}lder's inequality in \eqref{Eq:AppliedHolders}, \eqref{Eq:AppliedHolders2}. Note that the first term on the right hand side of \eqref{Eq:AppliedHolders2} tends to zero by \eqref{Eq:InverseAssumptionProp3.3}, the second term is uniformly bounded by assumption since for $p, n \ge 3$ we see that $\frac{p}{2(p-1)}\le \frac{3}{4} \le \frac{n}{2}$, and the third term is uniformly bounded since it weakly converges in $L^p$ by \eqref{AdmissableBound}.

\begin{align}
  IV&=  \left| \int_M\psi_{\infty}(1- P_j^{\frac{1}{p}} )\phi dV_{g_0}\right|
  \\&\le   \int_M\psi_{\infty}\left|1- P_j^{\frac{1}{p}} \right|\phi dV_{g_0}
    \\&\le  \lp\max \phi\rp \int_M\psi_{\infty}\left|1- P_j \right|^{\frac{1}{p}} dV_{g_0}
    \\&\le  \lp\max \phi\rp \lp\int_M\psi_{\infty}^p dV_{g_0} \rp^{\frac{1}{p}}\lp\int_M\left|1- P_j \right|^{\frac{1}{p-1}} dV_{g_0}\rp^{\frac{p-1}{p}}.\label{eq:BoundThis}
\end{align}

Now we observe that we can rewrite 
\begin{align}
    \lambda_1^i...\lambda_n^i-1&=\lambda_1^i...\lambda_n^i-\lambda_2^i...\lambda_n^i+\lambda_2^i...\lambda_n^i-...-\lambda_n^i+\lambda_n^i-1
    \\&=(\lambda_1^i-1)\lambda_2^i...\lambda_n^i+...+(\lambda_{n-1}^i-1)\lambda_n^i+(\lambda_n^i-1)
\end{align}

If we pick $n>k\ge n-1$, then using the triangle inequality for $|\cdot|^{\frac{1}{p-1}}$ and the generalized H\"{o}lder inequality we find 
\begin{align}
    &\left\| \bigl|\lambda_1^j...\lambda_n^j-1\bigr|^{\frac{1}{p-1}}\right\|_{L^1}
    \\ \le &\left\|\bigl|(\lambda_1^j-1)\lambda_2^j...\lambda_n^j\bigr|^{\frac{1}{p-1}}\right\|_{L^1}+...+\left\|\bigl|\lambda_n^j-1\bigr|^{\frac{1}{p-1}}\right\|_{L^1}
    \\\le &\left\|\bigl|\lambda_1^j-1\bigr|^{\frac{1}{p-1}}\right\|_{L^n}\left\|\bigl|\lambda_2^j\bigr|^{\frac{1}{p-1}}\right\|_{L^n}...\left\|\bigl|\lambda_n^j\bigr|^{\frac{1}{p-1}}\right\|_{L^n}
    \\&\quad +\left\|\bigl|\lambda_2^j-1\bigr|^{\frac{1}{p-1}}\right\|_{L^{n-1}}\left\|\bigl|\lambda_3^j\bigr|^{\frac{1}{p-1}}\right\|_{L^{n-1}}...\left\|\bigl|\lambda_n^j\bigr|^{\frac{1}{p-1}}\right\|_{L^{n-1}}
    \\&\quad +...+\left\|\bigl|\lambda_n^j-1\bigr|^{\frac{1}{p-1}}\right\|_{L^1}.
\end{align}

Now we want to relate convergence of $\lambda_{i}^{j}$ to convergence of $(\lambda_{i}^{j})^{-1}$. To this end, we have
\begin{equation}
    \left|1-\lambda_{i}^{j}\right|=\left(\lambda_{i}^{j}\right)\left|\left(\lambda_{i}^{j}\right)^{-1}-1\right|.
\end{equation}
We now apply H{\"o}lder's inequality with $p-1$ and $\frac{p-1}{p-2}$ to find
\begin{equation}
    \int_M\abs{1-\lambda_{i}^{j}}^{\frac{n}{p-1}}dV_{g_0}\leq\sup_{ij}\|\lambda_{i}^{j}\|^{\frac{n}{p-1}}_{L^n}\lp\int_M\abs{(\lambda_{i}^{j})^{-1}-1}^{\frac{n}{p-2}}dV_{g_0}\rp^{\frac{p-2}{p-1}}\label{eq:LpTrick}
\end{equation}
Therefore, if $\sup_{ij}\|\lambda_{i}^{j}\|_{L^{\frac{n}{p-1}}}<\infty$ and $(\lambda_{i}^{j})^{-1}$ converges to $1$ in $L^{\frac{n}{p-2}}$, then \eqref{eq:LpTrick} shows that $\lambda_{i}^{j}$ converges to $1$ in $L^{\frac{n}{p-1}}$. Now we notice that if $p >\frac{5n}{2}> n+2$ for $n \ge 3$ then
\begin{align}
    \frac{n}{p-2}<\frac{p-2}{p-2}=1< \frac{p}{p-1},
\end{align}
and hence \eqref{Eq:InverseAssumptionProp3.3} implies the required convergence and \eqref{eq:L^nMetricBound} implies the required bound to imply that \eqref{eq:BoundThis} is getting small.

Now if we choose $\eta = \frac{5n}{12} > 1$ for $n \ge 3$ and $p>\frac{5n}{2} > \frac{25n}{12}$ then by \eqref{FurtherWeakConvergence}  we find
\begin{align}
    I&=\left|\int_M (\Phi_{\infty}-|\nabla \varphi_j|_{g_0})P_j^{\frac{1}{p}} \phi dV_{g_0}\right| 
    \\& \le \left|\int_M (\Phi_{\infty}-|\nabla \varphi_j|_{g_0})(P_j^{\frac{1}{p}}-1) \phi dV_{g_0}\right|+ \left|\int_M (\Phi_{\infty}-|\nabla \varphi_j|_{g_0}) \phi dV_{g_0}\right|
    \\& \le \left|\lp\int_M (\Phi_{\infty}-|\nabla \varphi_j|_{g_0})^{\eta}dV_{g_0}\rp^{\frac{1}{\eta}} \lp\int_M(P_j^{\frac{1}{p}}-1)^{\eta'} \phi dV_{g_0}\rp^{\frac{1}{\eta'}}\right|
    \\&+ \left|\int_M (\Phi_{\infty}-|\nabla \varphi_j|_{g_0}) \phi dV_{g_0}\right| 
     \\& \le \left|\lp\int_M (\Phi_{\infty}-|\nabla \varphi_j|_{g_0})^{\eta}dV_{g_0}\rp^{\frac{1}{\eta}} \lp\int_M(P_j-1)^{\frac{\eta'}{p}} \phi dV_{g_0}\rp^{\frac{1}{\eta'}}\right|\label{eq:LastToZero1}
    \\&+ \left|\int_M (\Phi_{\infty}-|\nabla \varphi_j|_{g_0}) \phi dV_{g_0}\right|.\label{eq:LastToZero2}
\end{align}
Here we see that \eqref{eq:LastToZero2} tends to zero by \eqref{FurtherWeakConvergence}. Since $\eta=\frac{5n}{12}$ we see that $\eta'=\frac{5n-12}{5n}<1$ and $\frac{\eta'}{p}<\frac{1}{p}<\frac{n}{p-2}$ and so by \eqref{eq:LpTrick}, \eqref{FurtherWeakConvergence} and \eqref{Eq:InverseAssumptionProp3.3} we see that \eqref{eq:LastToZero1} tends to zero. 

Since in \eqref{eq:MainDeal} we have vanishing for every $\phi$, we find $\Phi_{\infty}=\psi_{\infty}$ a.e. So by combining with \eqref{AdmissableBound} we find
\begin{align}
\int_M |\nabla^{g_0} \varphi_{\infty}|_{g_0}^p  dV_{g_0} \le \int_M\Phi_{\infty}^p dV_{g_0} \le 1,
\end{align}
which implies that $\varphi_{\infty}$ is an admissible function for $d_{p,g_0,g_0}^D$.

We also notice that since $\varphi_j \rightarrow \varphi_{\infty}$ in $C^0$ we have
\begin{align}
    |\varphi_j(x_1)-\varphi_j(x_2)|\rightarrow  |\varphi_{\infty}(x_1)-\varphi_{\infty}(x_2)|,
\end{align}
which implies
\begin{align}
    \limsup_{j \rightarrow \infty} d_{p,g_j,g}^D(x_1,x_2)- \varepsilon &\le  \limsup_{j \rightarrow \infty} |\varphi_j(x_1)-\varphi_j(x_2)| 
    \\&= |\varphi_{\infty}(x_1)-\varphi_{\infty}(x_2)| \le d_{p,g_0,g_0}^D(x_1,x_2).
\end{align}
\end{proof}

\begin{proof}[Proof of Theorem \ref{thm:Main Theorem}]
    From Lemma \ref{lem:compactness_of_distance_functions}, a subsequence of $d_{p,g_i,g_0}^D$ converges uniformly to a symmetric function $d:M\times M \rightarrow [0,\infty)$ which satisfies the triangle inequality. By combining Theorem \ref{LimInfEst} with Theorem \ref{thm:pointwiseBelow} we see pointwise convergence of $d_{p,g_i,g_0}^D$ to $d_{p,g_0}=d_{p,g_0,g_0}^D$ for $D$ chosen to be larger than $\bar{D}$, the Morrey constant for $g_0$. Hence $d=d_{p,g_0,g_0}^D$ and the original subsequence must not have been necessary since every convergence subsequence has the same limit. So we find that $d_{p,g_i,g_0}^D$ converges uniformly to $d_{p,g_0,g_0}^D$, as desired.
\end{proof}

    \bibliographystyle{plain}
    \bibliography{bibliography}
    
\end{document}